\newcommand{\bcen}{\begin{center}}     \newcommand{\ecen}{\end{center}}
\newcommand{\bay}{\begin{array}}      \newcommand{\eay}{\end{array}}
\newcommand{\beq}{\begin{eqnarray*}}      \newcommand{\eeq}{\end{eqnarray*}}
\def\rad{\mathrm{rad}}
\def\dim{\mathrm{dim}}
\def\Hom{\mathrm{Hom}}
\def\Ker{\mathrm{Ker}}
\def\Im{\mathrm{Im}}
\def\Ext{\mathrm{Ext}}
\def\HH{\mathrm{HH}}
\def\Tor{\mathrm{Tor}}
\def\mod{\mathrm{mod}}
\def\Mod{\mathrm{Mod}}
\def\pd{\mathrm{pd}}
\def\lim{\mathrm{lim}}
\def\per{\mathrm{per}}
\def\proj{\mathrm{proj}}
\def\Gproj{\mathrm{Gproj}}
\def\Cone{\mathrm{Cone}}
\def\thick{\mathrm{thick}}
\begin{document}

\newtheorem{theorem}{Theorem}[section]
\newtheorem{proposition}[theorem]{Proposition}
\newtheorem{lemma}[theorem]{Lemma}
\newtheorem{corollary}[theorem]{Corollary}
\newtheorem{remark}[theorem]{Remark}
\newtheorem{example}[theorem]{Example}
\newtheorem{definition}[theorem]{Definition}
\newtheorem{question}[theorem]{Question}
\numberwithin{equation}{section}

\title{\large\bf
Singular equivalences induced by ring extensions}

\author{\large Yongyun Qin}
\date{\footnotesize School of Mathematics, Yunnan Normal University, \\ Kunming, Yunnan 650500, China. E-mail:
qinyongyun2006@126.com
}

\maketitle

\begin{abstract} Let $B \subseteq A$ be an extension of finite
dimensional algebras.
We provide a sufficient condition for the existence
of triangle equivalences of singularity categories (resp. Gorenstein defect categories) between
$A$ and $B$.
This result is applied to trivial extensions, Morita rings and
triangular matrix algebras to give several reduction methods on
singularity categories and Gorenstein defect categories of algebras.

\end{abstract}

\medskip

{\footnotesize {\bf Mathematics Subject Classification (2020)}:
16E35; 16G10; 16E65; 18G80.}

\medskip

{\footnotesize {\bf Keywords}: Singularity categories;
Gorenstein defect categories; Extensions of algebras. }

\bigskip

\section{\large Introduction}

\indent\indent
Throughout all algebras are finite dimensional associative
algebras over a field $k$, and all modules are finitely generated left modules unless stated
otherwise. The {\it singularity category} $D_{sg}(A)$ of an algebra
$A$ is defined as the Verdier quotient of the bounded derived category of finitely generated modules over $A$ by the
full subcategory of perfect complexes \cite{Buch21}.
It measures the ``regularity'' of $A$ in the sense
that $D_{sg}(A)=0$ if and only if $A$ has finite global
dimension. According to \cite{Buch21}, there is an embedding functor
$F$ from the stable category $\underline{\Gproj} A$ of finitely generated Gorenstein projective
modules to $D_{sg}(A)$, and the {\it Gorenstein
defect category} of $A$ is defined to be the Verdier quotient
$D_{def}(A):=D_{sg}(A)/ \Im F$, see \cite{BJO15}. This category
measures how far the algebra $A$ is from
being Gorenstein in the sense that
$D_{def}(A)=0$ if and only if $A$ is Gorenstein \cite{BJO15}.
In general, it is difficult to describe the singularity categories
and Gorenstein defect categories. Many people described these categories for
some special kinds of algebras \cite{Chen09, Chen18, CGL15, CL15, CSZ18, Kal15, Rin13},
and some experts compared these categories
between two algebras related to one another \cite{Chen14, EPS22, LHZ22, Lu17, Lu19,
PSS14, Qin24, QS23, Shen21, Z13}.

An {\it extension} of algebras is simply a finite dimensional algebra $A$ with an unital
subalgebra $B$, denoted $B \subseteq A$, and it provide us a useful framework
to reduce
the homological properties of algebras. Indeed, there are numerous works
comparing the
homological properties of $A$ and $B$ under certain conditions \cite{CLMS22, IM21, MN24, Xi06, XX13}.
From \cite{CLMS22}, an extension $B \subseteq A$ is called
{\it left (resp. right) bounded} if $A/B$ has finite projective
dimension as a $B$-bimodule, $A/B$ is projective as a left (resp. right) $B$-module and
some tensor power $(A/B)^{\otimes _Bp}$
is 0. Using the Jacobi-Zariski long exact sequence, Cibils et al. proved that
for a bounded extension $B \subseteq A$,
the Hochschild
homology groups of $B$ vanish in high enough degree if and only if so does $A$,
and thus $B$ satisfies Han's conjecture if and only if $A$ does \cite{CLMS22}.
Moreover, Iusenkoa and MacQuarrie showed the finitistic
dimension conjecture is preserved under bounded extensions \cite{IM21}.
On the other hand, both the Hochschild homology and the finitistic
dimension conjecture are invariant under singular equivalence of Morita type with level \cite{Wang15}.
So it is nature to compared the singularity categories
between two algebras linked by a bounded extension.
Our main result is the following, which is listed in Theorem~\ref{theorem-sing-equiv}
and Theorem~\ref{theorem-def-equiv}.

\begin{theorem}\label{theorem-main}
Let $A$ and $B$
be two algebras and $B\subseteq A$ be an extension. Assume that
$\pd _{B^e}(A/B)<\infty$, $(A/B)^{\otimes _Bp}=0$ for some integer $p$ and
$\Tor _i^B(A/B, (A/B)^{\otimes _Bj})=0$ for each $i,j\geq 1$.
Then there is a
singular equivalence of Morita type with level between $A$ and $B$. Moreover,
this singular equivalence induces an equivalence on the Gorenstein defect categories
if the extension is split.
\end{theorem}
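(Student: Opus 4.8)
The plan is to realize the equivalence through the bimodules ${}_AA_B$ and ${}_BA_A$ — equivalently, through derived induction $A\otimes^{\mathbb L}_B-$ and restriction — and to reduce the whole statement to two isomorphisms in the singularity categories of the enveloping algebras. First one records the one‑sided consequences of the hypotheses: since $B\otimes_k B$ is free on each side, $\pd_{B^e}(A/B)<\infty$ forces $\pd_B(A/B)<\infty$ and $\pd_{B^{\op}}(A/B)<\infty$, hence $\pd_B(A)<\infty$ and $\pd_{B^{\op}}(A)<\infty$. Consequently $A\otimes^{\mathbb L}_B-\colon D^b(\mod B)\to D^b(\mod A)$ and the exact functor $\mathrm{res}\colon D^b(\mod A)\to D^b(\mod B)$ are well defined, take perfect complexes to perfect complexes, and therefore descend to exact functors $\bar L\colon D_{sg}(B)\to D_{sg}(A)$ and $\bar R\colon D_{sg}(A)\to D_{sg}(B)$; these are the functors to be shown mutually inverse. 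Everything then reduces to the isomorphisms ${}_BA_B\cong{}_BB_B$ in $D_{sg}(B^e)$ and $A\otimes_BA\cong A$ in $D_{sg}(A^e)$.

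The first of these is immediate from $0\to B\to A\to A/B\to0$ together with $\pd_{B^e}(A/B)<\infty$. For the second, consider the normalized $B$-relative bar resolution of $A$ as an $A$-bimodule
\[
\cdots\longrightarrow A\otimes_B(A/B)^{\otimes_B2}\otimes_BA\longrightarrow A\otimes_B(A/B)\otimes_BA\longrightarrow A\otimes_BA\stackrel{\mu}{\longrightarrow}A\longrightarrow0,
\]
which is exact, being contractible over $B$, and which is \emph{finite} (of length $<p$) because $(A/B)^{\otimes_Bp}=0$. It then suffices to prove that each term $A\otimes_B(A/B)^{\otimes_Bn}\otimes_BA$ with $n\ge1$ has finite projective dimension over $A^e$, for then $\ker\mu$ has finite $A^e$-projective dimension and $\mu$ becomes invertible in $D_{sg}(A^e)$. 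This perfectness is established in two steps. \emph{(a)} One propagates the Tor-vanishing: by associativity of $-\otimes^{\mathbb L}_B-$ and induction on $a$, $\Tor^B_i\big((A/B)^{\otimes_Ba},(A/B)^{\otimes_Bb}\big)=0$ for all $i,a,b\ge1$; a further induction then gives $\pd_{B^e}\big((A/B)^{\otimes_Bn}\big)<\infty$ for all $n$; and feeding $0\to B\to A\to A/B\to0$ into the relevant long exact sequences yields $\Tor^B_i(A,(A/B)^{\otimes_Bn})=\Tor^B_i((A/B)^{\otimes_Bn},A)=\Tor^B_i(A,(A/B)^{\otimes_Bn}\otimes_BA)=0$ for $i\ge1$. \emph{(b)} Choosing a finite $B^e$-projective resolution $Q_\bullet\to(A/B)^{\otimes_Bn}$, the complex $A\otimes_BQ_\bullet\otimes_BA$ consists of $A^e$-projectives (as $A\otimes_B(B\otimes_kB)\otimes_BA\cong A\otimes_kA$) and, precisely because of the vanishings in (a), is quasi-isomorphic to $A\otimes_B(A/B)^{\otimes_Bn}\otimes_BA$; hence the latter has finite $A^e$-projective dimension.

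With the two isomorphisms in hand, the composites $\bar R\bar L\simeq{}_BA_B\otimes^{\mathbb L}_B-$ and $\bar L\bar R\simeq(A\otimes_BA)\otimes^{\mathbb L}_A-$ are isomorphic to the identity on $D_{sg}(B)$ and $D_{sg}(A)$ respectively, so $D_{sg}(A)\simeq D_{sg}(B)$. Since this equivalence is given by the bimodules ${}_AA_B$ and ${}_BA_A$ — projective on one side, of finite projective dimension on the other — and since $M\otimes_BN\cong A$ in $D_{sg}(A^e)$ and $N\otimes_AM\cong B$ in $D_{sg}(B^e)$, the standard passage to high enough syzygies (using the finite projective dimensions to promote the $D_{sg}$-isomorphisms to stable isomorphisms of syzygies) upgrades it to a singular equivalence of Morita type with level, the level being controlled by $p$ and $\pd_{B^e}(A/B)$. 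For the last assertion, assume the extension is split, so $A\cong B\oplus(A/B)$ as $B$-bimodules; then, using this splitting, restriction and induction send Gorenstein-projective modules to Gorenstein-projective modules (a complete resolution stays totally acyclic and acyclic against projectives after the relevant change of rings), so the equivalence identifies $\underline{\Gproj}A$ with $\underline{\Gproj}B$ inside the singularity categories and hence descends to the Verdier quotients defining the Gorenstein defect categories.

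I expect the main obstacle to be step \emph{(b)}: verifying that the bar-resolution terms $A\otimes_B(A/B)^{\otimes_Bn}\otimes_BA$ are perfect over $A^e$. This is exactly where all three hypotheses are used simultaneously, and it requires organizing the Tor-vanishing on both sides carefully enough that a finite $B^e$-projective resolution of $(A/B)^{\otimes_Bn}$ survives being pushed through $A\otimes_B(-)\otimes_BA$. A secondary point needing care is the bookkeeping that converts the $D_{sg}(A^e)$-isomorphism into an honest level-$l$ Morita datum, and, in the split case, the verification that restriction and induction genuinely preserve Gorenstein-projectivity.
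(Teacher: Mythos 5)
Your first half (the singular equivalence of Morita type with level) follows essentially the same route as the paper: the adjoint pair $A\otimes_B^L-$ and restriction, the two isomorphisms $B\cong {}_BA_B$ in $D_{sg}(B^e)$ and $A\otimes_BA\cong A$ in $D_{sg}(A^e)$, the finite relative bar complex made exact and finite by $(A/B)^{\otimes_Bp}=0$, the propagation of Tor-vanishing, the perfectness over $A^e$ of the terms $A\otimes_B(A/B)^{\otimes_Bn}\otimes_BA$, and the syzygy upgrade to a Morita-type datum (which the paper delegates to Dalezios and to Qin's earlier work). That part is fine.

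The Gorenstein defect part has a genuine gap: your claim that, in the split case, \emph{restriction sends Gorenstein projective $A$-modules to Gorenstein projective $B$-modules} is false under the stated hypotheses, and the parenthetical justification does not work because after restricting along $B\subseteq A$ the terms of a complete resolution are no longer projective $B$-modules (the module ${}_BA$ only has finite projective dimension). A concrete counterexample satisfying every hypothesis of the theorem: let $C$ be the path algebra of $1\to 2$, $M=S_1$ the simple at vertex $1$ (so $\pd_CM=1$), and $A=\left(\begin{array}{cc} k & 0 \\ M & C \end{array}\right)\cong (k\times C)\ltimes M$, a split extension of $E=k\times C$ with $M\otimes_EM=0$, $\Tor^E_i(M,M)=0$ and $\pd_{E^e}M<\infty$. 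The projective (hence Gorenstein projective) $A$-module $Ae'$, $e'$ the idempotent at the $k$-corner, restricts to $k\oplus S_1$ over $E$, which is not Gorenstein projective since $E$ is hereditary and $S_1$ is not projective. So the intended identification of $\underline{\Gproj}A$ with $\underline{\Gproj}B$ cannot be obtained this way. The split hypothesis is needed for a subtler statement, and this is where the real work lies in the paper: using that ${}_BB$ is a summand of ${}_BA$, one shows via the counit triangle $FGX\to X\to\Cone(\eta_X)$, whose cone is controlled by $\Cone(\eta_A)\in K^b(\proj A^e)$, that $\Ext^i_B(GX,B)=0$ for $i$ larger than a uniform bound whenever $X\in{}^{\perp}A$; hence only a \emph{high syzygy} $\Omega^l(GX)$ is Gorenstein projective, which is exactly what is needed to get an induced functor $D^b(A)_{fGd}\to D^b(B)_{fGd}$. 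For the other direction the paper invokes the Hu--Pan theory of stable functors to place $F(X)$ in $D^b(A)_{fGd}$ for $X\in\Gproj B$ (your claim that induction preserves Gorenstein projectivity can in fact be proved directly from $\pd_BA<\infty$ and $\pd A_B<\infty$, but you would still need the Tor-vanishing to identify $A\otimes_B^LX$ with $A\otimes_BX$). Finally, one still has to argue that the induced adjoint pair on $D^b(-)_{fGd}/K^b(\proj-)\cong\underline{\Gproj}(-)$ is an equivalence (fully faithful plus dense, transported from the singular equivalence) and then pass to the Verdier quotients defining $D_{def}$; none of this is automatic from "the equivalence identifies the subcategories", and it is missing from your proposal.
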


Here, an extension $B\subseteq A$ is split if the algebra inclusion $B\hookrightarrow A$
splits. From \cite{CL20}, the Tor-vanishing condition in Theorem~\ref{theorem-main}
is equivalent to $\Tor _i^B((A/B)^{\otimes _Bj}, A/B)=0$ for each $i,j\geq 1$.
Therefore, the assumption of Theorem~\ref{theorem-main} is weaker than the definition
of bounded extension,
where the Tor-vanishing condition is replaced
by $A/B$ is projective as a left or a right $B$-module.
As a result, we generalize the main result of
Cibils et al. \cite{CLMS22} by showing that Han's conjecture is preserved
under certain extensions which are strictly bigger than
bounded extensions, see Remark~\ref{rem-ext} and Corollary~\ref{cor-Han-conj}.
Moreover, we apply Theorem~\ref{theorem-main} to trivial extensions, Morita rings
and triangular matrix algebras, and we get several reduction methods on
singularity categories and Gorenstein defect categories,
see Corollary~\ref{cor-trivial-extension}, Corollary~\ref{cor-tri-matrix-alg},
Corollary~\ref{cor-Morita-contex} and Example~\ref{exam-1}.

This paper is organized as follows. In section 2, we recall some relevant
definitions and conventions.
In section 3 we prove Theorem I and we generalize the result of
Cibils et al. on Han's conjecture.
Some applications and an example are given in the final section.

 \section{\large Definitions and conventions}\label{Section-definitions and conventions}

\indent\indent In this section we will fix our notations and recall some basic definitions.

Let $\mathcal{S}$ be a set of objects of
a triangulated category $\mathcal{T}$. We denote by $\thick \mathcal{S}$
the smallest triangulated subcategory of $\mathcal{T}$
containing $\mathcal{S}$ and closed under taking direct summands.

Let $A$ be finite dimensional associative
algebras over a field $k$. We denote by $\Mod A$ the
category of left $A$-modules, and we view right $A$-modules
as left $A^{op}$-modules, where $A^{op}$ is the opposite algebra of $A$.
Denote by $\mod A$ and $\proj A$
the full subcategories of $\Mod A$ consisting of all finitely
generated modules and finitely
generated projective modules, respectively.

Let $K (A)$ be the homotopy category
of complexes over $\Mod A$, and
let
$K^b (\proj A)$ be the bounded homotopy category of complexes
over $\proj A$. We denote by
$\mathcal{D}(\Mod A)$ (resp. $\mathcal{D}^b(\mod A)$)
the derived category (resp. bounded derived category) of complexes over $\Mod A$ (resp. $\mod A$),
and denote by $[-]$ the shift functor on complexes.
Usually, we
just write $\mathcal{D} A$ (resp. $\mathcal{D}^b(A)$) instead of $ \mathcal{D}(\Mod A)$ (resp. $ \mathcal{D}^b(\mod A)$).
Up to isomorphism, the objects in $K^{b}(\proj A)$ are
precisely all the compact objects in $\mathcal{D} A$. For
convenience, we do not distinguish $K^{b}(\proj A)$ from the {\it
perfect derived category} $\mathcal{D}_{\per}(A)$ of $A$, i.e., the
full triangulated subcategory of $\mathcal{D} A$ consisting of all
compact objects, which will not cause any confusion. Moreover, we
also do not distinguish
$\mathcal{D}^b(A)$ from its essential image under the
canonical embedding into $\mathcal{D} A$.

Following \cite{Buch21, Orl04}, the {\it singularity category} of $A$ is the
Verdier quotient $D_{sg}(A) = \mathcal{D}^b(A)/K^{b}(\proj A)$.
A finitely
generated $A$-module $M$ is called
{\it Gorenstein projective} if there is an
exact sequence $$\xymatrix{P^\bullet = \cdots \ar[r]& P^{-1}
\ar[r]^{d^{-1}} & P^0 \ar[r]^{d^{0}} & P^1
\ar[r] & \cdots} $$
of $\proj A$ with $M= \Ker d^0$ such that $\Hom _A(P^\bullet, Q)$ is exact for
every $Q \in \proj A$. Denote by $\Gproj A$
the subcategory of $\mod A$ consisting of Gorenstein projective modules.
It is well known that $\Gproj A$
is a Frobenius category, and hence its stable category $\underline{\Gproj} A$ is a triangulated category.
Moreover, there is a canonical triangle functor $F: \underline{\Gproj} A \rightarrow D_{sg}(A)$
sending a Gorenstein projective module to the corresponding stalk complex concentrated in degree zero
\cite{Buch21}.

\begin{definition}{\rm (See \cite{BJO15})
The Verdier quotient $D_{def}(A):=D_{sg}(A)/ \Im F$
is called the} Gorenstein defect
category {\rm of $A$}.
\end{definition}

For an algebra $A$, we denote by $\Omega _A (-)$ the syzygy functor on the
stable category $\underline{\mod }A $ of $A$-modules, and
define
${}^{\perp}A:=\{X\in\mod A|\Ext_A^i(X, A)=0\mbox{ for all }i>0\}$.
Denote by $D^b(A)_{fGd}$ the full subcategory of
$D^b(A)$ formed by those complexes quasi-isomorphic to bounded complexes of Gorenstein projective modules.
Here, the definition of $D^b(A)_{fGd}$ agrees with that in \cite{Kato02},
where the objects in $D^b(A)_{fGd}$ are called complexes of finite Gorenstein projective dimension,
see \cite[Definition 2.7 and Proposition 2.10]{Kato02}.
 Moreover, $D^b(A)_{fGd}$ is a thick subcategory
of $D^b(A)$ generated by all the Gorenstein projective modules, that is,
$D^b(A)_{fGd}=\thick (\Gproj A)$, see \cite[Theorem 2.7]{LHZ22}.
The following equivalence
$$ \underline{\Gproj }A \cong D^b(A)_{fGd}/K^b(\proj A)$$
is well known, see \cite[Theorem 4.4.1]{Buch21}, \cite[Lemma 4.1]{CR20}
or \cite[Theorem 4]{PZ15} for examples.

\section{Singularity categories and ring extensions}
\indent\indent In this section, we will compare the singularity categories and the Gorenstein
defect categories between two algebras linked by an extension.
The following lemma is well-known. Here, we give a proof for readers' convenience.
\begin{lemma}\label{lem-tensor}
Let $A$, $B$ and $C$ be algebras, $M$ be an $A$-$B$ bimodule and $N$ be a $B$-$C$ bimodule.
If $\Tor _i^B(M,N)=0$ for any $i>0$, then $M\otimes _B^LN\cong M\otimes _BN\cong$ in $\mathcal{D}(A\otimes C^{op})$.

\end{lemma}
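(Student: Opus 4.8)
The plan is to establish the isomorphism $M\otimes_B^L N\cong M\otimes_B N$ in $\mathcal{D}(A\otimes C^{\op})$ by resolving $N$ and exploiting the hypothesis that the higher Tor groups vanish. First I would choose a projective resolution $P^\bullet\to N$ of $N$ as a left $B$-module; since $N$ is a $B$-$C$ bimodule, this can be taken in the category of $B$-$C$ bimodules (or, equivalently, one works with a resolution by projective $B$-modules and keeps track of the right $C$-action, which passes through the tensor product $M\otimes_B(-)$ since the $C$-action commutes with everything on the $B$-side). By definition, $M\otimes_B^L N$ is computed as the complex $M\otimes_B P^\bullet$, and this carries an $A$-$C$ bimodule structure, hence represents an object of $\mathcal{D}(A\otimes C^{\op})$.

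Next I would identify the cohomology of the complex $M\otimes_B P^\bullet$. The cohomology groups are precisely $\Tor_i^B(M,N)$ in each degree $i\geq 0$. The hypothesis $\Tor_i^B(M,N)=0$ for all $i>0$ forces the complex $M\otimes_B P^\bullet$ to be quasi-isomorphic to its zeroth cohomology, which is $\Tor_0^B(M,N)=M\otimes_B N$, concentrated in degree zero. Thus there is a quasi-isomorphism $M\otimes_B P^\bullet\to M\otimes_B N$, and one checks this quasi-isomorphism is a morphism of complexes of $A$-$C$ bimodules, so it is an isomorphism in $\mathcal{D}(A\otimes C^{\op})$. Composing with the canonical isomorphism $M\otimes_B^L N\cong M\otimes_B P^\bullet$ in the derived category gives the desired $M\otimes_B^L N\cong M\otimes_B N$.

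There is no serious obstacle here; the only point requiring a little care is the bookkeeping of the bimodule structures, namely that the resolution can be chosen compatibly so that all the maps involved are bimodule maps and the derived tensor product is genuinely computed inside $\mathcal{D}(A\otimes C^{\op})$ rather than merely in the derived category of abelian groups. This is standard, and one can cite the usual balancing and flatness facts for derived tensor products of bimodules. (I also note the statement as typed contains a small typo — ``$M\otimes_BN\cong$'' should read ``$M\otimes_BN$'' — which the proof makes clear.)
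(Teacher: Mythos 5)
Your proof is correct and follows essentially the same argument as the paper: take a projective bimodule resolution of one factor, tensor it over $B$, and use the Tor-vanishing hypothesis to conclude the resulting complex is quasi-isomorphic to $M\otimes_B N$ as a complex of $A$-$C$ bimodules. The only (immaterial) difference is that you resolve $N$ as a $B$-$C$ bimodule whereas the paper resolves $M$ as an $A$-$B$ bimodule; by symmetry the two arguments are identical, and your remark about the typo in the statement is also correct.
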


\begin{proof}
Let $$\cdots \longrightarrow P^{-2} \longrightarrow
P^{-1} \longrightarrow P^{0} \longrightarrow
M \longrightarrow 0 $$
be the projective resolution of $M$ as an $A$-$B$ bimodule
and let $P^\bullet$ be the deleted complex $$ \cdots \longrightarrow P^{-2} \longrightarrow
P^{-1} \longrightarrow P^{0} \longrightarrow 0.$$ Then
$M\otimes _B^LN\cong P^\bullet\otimes _BN$ in $\mathcal{D}(A\otimes C^{op})$. Since $\Tor _i^B(M,N)=0$ for any $i>0$, the complex
 of $A$-$C$ bimodules $$\cdots \longrightarrow P^{-2}\otimes _BN \longrightarrow
P^{-1}\otimes _BN \longrightarrow P^{0}\otimes _BN \longrightarrow
M\otimes _BN \longrightarrow 0 $$ is exact, and thus $M\otimes _BN\cong P^\bullet\otimes _BN$ in $\mathcal{D}(A\otimes C^{op})$.
Therefore, $M\otimes _B^LN\cong M\otimes _BN\cong$ in $\mathcal{D}(A\otimes C^{op})$.
\end{proof}

The following lemma is essentially due to \cite[Lemma 2.5]{AKLY17}.
\begin{lemma}\label{lem-preserve-kbproj}
Let $B\subseteq A$ be a ring extension.
Then the following statements hold:

{\rm (1)} The functor $A\otimes _B^L-\otimes _B^LA :\mathcal{D}(B^e)\rightarrow \mathcal{D}(A^e)$
sends $K^b(\proj B^e)$ to $K^b(\proj A^e)$.

{\rm (2)} If $\pd _{B^e}(A/B)<\infty$, then the functor $_B(A/B)\otimes _B^L-:\mathcal{D}(B^e)\rightarrow \mathcal{D}(B^e)$
sends $K^b(\proj B^e)$ to $K^b(\proj B^e)$.

\end{lemma}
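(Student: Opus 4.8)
The plan is to reduce both statements to the behaviour of the relevant derived tensor functors on the single object $B$ (viewed as a $B$-bimodule, i.e.\ the regular object $B^e/\text{stuff}$, or more precisely the free rank-one module $B^e$), since $K^b(\proj B^e)$ is by definition the smallest thick subcategory of $\mathcal{D}(B^e)$ containing $B^e$ (equivalently, containing $B$ as a bimodule up to the shift generated by projectives), and similarly $K^b(\proj A^e)=\thick(A^e)$. Because $A\otimes_B^L-\otimes_B^LA$ and $_B(A/B)\otimes_B^L-$ are triangle functors that commute with shifts and preserve direct summands, it suffices in each case to check that they send the bimodule $B^e$ (equivalently the indecomposable projective summands of $B^e$, each of the form $Be_i\otimes_k e_jB$) into the target perfect category; then the general case follows by induction on the length of a filtration built from triangles, using that perfect complexes over $A^e$ (resp.\ $B^e$) are closed under cones, shifts, and summands.

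For part~(1), I would compute $A\otimes_B^L B^e\otimes_B^L A$. Since $B^e=B\otimes_k B^{op}$ is free as a left $B$-module and as a right $B$-module, no higher Tor appears: $A\otimes_B^L(B\otimes_k B^{op})\otimes_B^L A\cong A\otimes_k A^{op}=A^e$ in $\mathcal{D}(A^e)$ (here I invoke Lemma~\ref{lem-tensor} twice, once for $A\otimes_B^L B$ and once for $B^{op}\otimes_B^L A$, each of which has vanishing higher Tor because $B$ is projective on the appropriate side). More concretely, on an indecomposable projective $B^e$-summand $Be_i\otimes_k e_jB$ one gets $Ae_i\otimes_k e_jA$, an indecomposable projective $A^e$-module. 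Hence the functor sends the additive generators of $K^b(\proj B^e)$ to objects of $\proj A^e$, and by the thick-closure argument above it sends all of $K^b(\proj B^e)$ into $K^b(\proj A^e)$. This is the content of \cite[Lemma 2.5]{AKLY17} and is essentially formal once the Tor-vanishing is noted.

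For part~(2), the hypothesis $\pd_{B^e}(A/B)<\infty$ says exactly that $A/B$, as an object of $\mathcal{D}(B^e)$, lies in $K^b(\proj B^e)$ — choose a finite projective bimodule resolution $Q^\bullet\to A/B$. Then for any $X\in K^b(\proj B^e)$ the complex $(A/B)\otimes_B^L X$ is represented by the total complex $Q^\bullet\otimes_B X$, which is a bounded complex built from terms of the form $Q^n\otimes_B X^m$ with $Q^n$ projective over $B^e$ and $X^m$ projective over $B^e$; a projective $B^e$-bimodule tensored over $B$ with a projective $B^e$-bimodule is again a (finite sum of) projective $B^e$-bimodule, so the total complex lies in $K^b(\proj B^e)$. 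Again phrasing it via thick subcategories: $_B(A/B)\otimes_B^L-$ is a triangle endofunctor of $\mathcal{D}(B^e)$ that sends the generator $B^e$ into $K^b(\proj B^e)$ (because $(A/B)\otimes_B^L B^e\simeq (A/B)\otimes_k B^{op}$, which has finite projective dimension since $A/B$ does), hence it sends all of $\thick(B^e)=K^b(\proj B^e)$ into $K^b(\proj B^e)$.

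The only mildly delicate point — and the step I would be most careful about — is keeping track of the bimodule structures when I replace derived tensor products by underived ones: I must make sure that in $A\otimes_B^L(-)\otimes_B^L A$ the left $A$-action and right $A$-action are the outer ones while the two tensor products are over $B$ using the $B$-bimodule structure of the middle argument, and that Lemma~\ref{lem-tensor} is being applied with the correct one-sided projectivity hypothesis at each stage (left-$B$-projectivity of $B^e$ to kill $\Tor^B_{>0}(A, B^e)$ on the left, right-$B$-projectivity of $B^e$ to kill the Tor on the right). Once the bookkeeping is set up this is routine; there is no genuine obstacle, which is why the lemma is attributed to \cite{AKLY17}.
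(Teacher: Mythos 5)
Your proof is correct and follows essentially the same route as the paper: reduce to the free generator $B\otimes_k B$ of $K^b(\proj B^e)$ and check its image lies in the target perfect category, using that $B\otimes_kB$ is projective on either side (so derived and underived tensors agree) and that $(B\otimes_kB)\otimes_B(B\otimes_kB)\cong(B\otimes_kB)^{\dim_kB}$ for part (2); the paper simply delegates the thick-closure/d\'evissage step you spell out to \cite[Lemma 2.5]{AKLY17}.
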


\begin{proof}
(1) This follows from the isomorphisms $$A\otimes _B^L(B\otimes _kB)\otimes _B^LA
\cong (A\otimes _B^LB)\otimes _k(B\otimes _B^LA)\cong A\otimes _kA$$ and
\cite[Lemma 2.5]{AKLY17}.

(2) Let $P^\bullet \rightarrow A/B$ be the bounded projective resolution of $A/B$ as a $B$-$B$ bimodule.
Then $_B(A/B)\otimes _B^L(B\otimes _kB)\cong P^\bullet \otimes _B(B\otimes _kB)$ in $\mathcal{D}(B^e)$.
Since $(B\otimes _kB)\otimes _B(B\otimes _kB)\cong B\otimes _kB\otimes _kB\cong (B\otimes _kB)^{\dim _kB}$,
we have that $P^i \otimes _B(B\otimes _kB)\in \proj B^e$. Therefore, $P^\bullet \otimes _B(B\otimes _kB)
\in K^b(\proj B^e)$ and then the statement follows from \cite[Lemma 2.5]{AKLY17}.
\end{proof}

\begin{lemma}\label{lem-ring-extension-tor}
Let $B\subseteq A$ be a ring extension.
If $\Tor _i^B(A/B, (A/B)^{\otimes _Bj})=0$ for each $i,j\geq 1$, then the following statements hold:

{\rm (1)} $\Tor _i^B(A, A)=0$ for each $i\geq 1$;

{\rm (2)} $\Tor _i^B((A/B)^{\otimes _Bj}, A)=0$ for each $i,j\geq 1$;

{\rm (3)} $\Tor _i^B(A, (A/B)^{\otimes _Bj}\otimes _BA)=0$ for each $i,j\geq 1$.
\end{lemma}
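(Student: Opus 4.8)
The common backbone of all three statements will be the short exact sequence of $B$-bimodules
\[
0 \longrightarrow B \longrightarrow A \longrightarrow A/B \longrightarrow 0 ,
\]
with the first map the inclusion $B\hookrightarrow A$; write $\overline{A}:=A/B$. Three facts will be used over and over: $B$ is free of rank one on each side, so $\Tor_i^B(B,-)=0=\Tor_i^B(-,B)$ for $i\geq 1$; the hypothesis says $\Tor_i^B(\overline{A},\overline{A}^{\otimes_B j})=0$ for $i,j\geq 1$; and, by the symmetry recalled after Theorem~\ref{theorem-main} (from \cite{CL20}), also $\Tor_i^B(\overline{A}^{\otimes_B j},\overline{A})=0$ for $i,j\geq 1$.

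The plan is first to replace every Tor occurring in the lemma by one having $\overline{A}$ (or a tensor power of it) in a single slot. Applying $\Tor_*^B(\overline{A}^{\otimes_B j},-)$ to the displayed sequence, the terms $\Tor_i^B(\overline{A}^{\otimes_B j},B)$ and $\Tor_i^B(\overline{A}^{\otimes_B j},\overline{A})$ vanish for $i\geq 1$, so its long exact sequence forces $\Tor_i^B(\overline{A}^{\otimes_B j},A)=0$ for all $i,j\geq 1$; this is (2), and for $j=1$ it records $\Tor_i^B(\overline{A},A)=0$. Applying instead $\Tor_*^B(-,\overline{A}^{\otimes_B j})$ to the same sequence, and using $\Tor_i^B(B,\overline{A}^{\otimes_B j})=0=\Tor_i^B(\overline{A},\overline{A}^{\otimes_B j})$, yields $\Tor_i^B(A,\overline{A}^{\otimes_B j})=0$ for all $i,j\geq 1$. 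Now (1) drops out by applying $\Tor_*^B(-,A)$ to the displayed sequence: the outer terms $\Tor_i^B(B,A)$ and $\Tor_i^B(\overline{A},A)$ vanish for $i\geq 1$, the second by what was just shown, whence $\Tor_i^B(A,A)=0$.

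For (3) I would first tensor the displayed sequence on the left over $B$ with $\overline{A}^{\otimes_B j}$. Since $\Tor_1^B(\overline{A}^{\otimes_B j},\overline{A})=0$, exactness is preserved, and since $\overline{A}^{\otimes_B j}\otimes_B B\cong \overline{A}^{\otimes_B j}$ and $\overline{A}^{\otimes_B j}\otimes_B\overline{A}\cong \overline{A}^{\otimes_B (j+1)}$, the result is a short exact sequence of $B$-bimodules
\[
0\longrightarrow \overline{A}^{\otimes_B j}\longrightarrow \overline{A}^{\otimes_B j}\otimes_B A\longrightarrow \overline{A}^{\otimes_B (j+1)}\longrightarrow 0 .
\]
Applying $\Tor_*^B(A,-)$ to it and using the vanishing of $\Tor_i^B(A,\overline{A}^{\otimes_B j})$ and $\Tor_i^B(A,\overline{A}^{\otimes_B (j+1)})$ from the previous step, we obtain $\Tor_i^B(A,\overline{A}^{\otimes_B j}\otimes_B A)=0$ for all $i,j\geq 1$, which is (3). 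I do not expect a serious obstacle here: the closest thing to one is bookkeeping — keeping track of which slot of $\Tor^B$ each module occupies, and checking that the left tensoring used in (3) stays exact; once the two short exact sequences are in place, everything reduces to routine chases through long exact Tor-sequences.
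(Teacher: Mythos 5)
Your proof is correct. Parts (1) and (2) follow essentially the same route as the paper: long exact Tor-sequences attached to $0\to B\to A\to A/B\to 0$, together with the symmetric vanishing $\Tor_i^B((A/B)^{\otimes_B j},A/B)=0$ from \cite[Corollary 4.3]{CL20}, which the paper also invokes for (2). For (3) you genuinely diverge: the paper quotes \cite[Lemma 4.2]{CL20}, combined with (2), to obtain $\Tor_i^B(A/B,(A/B)^{\otimes_B j}\otimes_B A)=0$ and then runs the long exact sequence in the first variable, whereas you construct the auxiliary short exact sequence $0\to (A/B)^{\otimes_B j}\to (A/B)^{\otimes_B j}\otimes_B A\to (A/B)^{\otimes_B (j+1)}\to 0$ (exact precisely because $\Tor_1^B((A/B)^{\otimes_B j},A/B)=0$) and sandwich $\Tor_i^B(A,(A/B)^{\otimes_B j}\otimes_B A)$ between the vanishing terms $\Tor_i^B(A,(A/B)^{\otimes_B j})$ and $\Tor_i^B(A,(A/B)^{\otimes_B (j+1)})$, which you had already derived from the hypothesis. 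Your variant is more self-contained: it replaces the black-box Lemma 4.2 of \cite{CL20} by an explicit sequence, at the modest cost of checking the $\Tor_1$-vanishing that keeps the tensored sequence exact (which you do, again via the symmetry statement that the paper needs anyway); the paper's version is shorter on the page but leans on one more external result.
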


\begin{proof}
(1) For any $i\geq 1$, applying $A/B \otimes _B-$ to the exact sequence of left $B$ modules
$$0\rightarrow B\rightarrow A\rightarrow A/B\rightarrow 0$$
and using the fact that $\Tor _i^B(A/B, A/B)=0$,
we have that
 $\Tor _i^B(A/B, A)=0$. Similarly, applying $-\otimes _BA$ to the exact sequence of right $B$ modules
$$0\rightarrow B\rightarrow A\rightarrow A/B\rightarrow 0,$$ we get $\Tor _i^B(A, A)=0$ for each $i\geq 1$.

(2) It follows from \cite[Corollary 4.3]{CL20} that $\Tor _i^B((A/B)^{\otimes _Bj},A/B)=0$ for each $i,j\geq 1$.
Applying $(A/B)^{\otimes _Bj}\otimes _B-$ to the exact sequence of left $B$ modules
$$0\rightarrow B\rightarrow A\rightarrow A/B\rightarrow 0,$$ we infer that $\Tor _i^B((A/B)^{\otimes _Bj}, A)=0$ for each $i,j\geq 1$.

(3) For each $i,j\geq 1$, we have that $\Tor _i^B(A/B, (A/B)^{\otimes _Bj}\otimes _BA)=0$
by (2) and \cite[Lemma 4.2]{CL20}.
Applying $-\otimes _B(A/B)^{\otimes _Bj}\otimes _BA$ to the exact sequence of right $B$ modules
$$0\rightarrow B\rightarrow A\rightarrow A/B\rightarrow 0,$$ we infer that $\Tor _i^B(A, (A/B)^{\otimes _Bj}\otimes _BA)=0$ for each $i,j\geq 1$.
\end{proof}

Now we will give a sufficient condition for the existence of a singular equivalence
between two algebras linked by a ring extension.
This constitutes the first part of the main theorem presented in the
introduction.
\begin{theorem}\label{theorem-sing-equiv}
Let $A$ and $B$
be two algebras and $B\subseteq A$ be a ring extension. Assume that
$\pd _{B^e}(A/B)<\infty$, $(A/B)^{\otimes _Bp}=0$ for some integer $p$ and
$\Tor _i^B(A/B, (A/B)^{\otimes _Bj})=0$ for each $i,j\geq 1$.
Then there is a
singular equivalence of Morita type with level between $A$ and $B$.
\end{theorem}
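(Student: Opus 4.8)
The plan is to produce an explicit pair of bimodules witnessing a singular equivalence of Morita type with level, namely $_AA_B$ and $_BA_A$, where $A$ is regarded as a $B$-bimodule via the inclusion $B\subseteq A$. Recall that such an equivalence of level $n$ requires $A$ to be projective as a left $A$-module, right $B$-module, left $B$-module and right $A$-module (the first and last are automatic; the two over $B$ need not hold, so one should instead work in the derived setting and use the Tor-vanishing to replace $\otimes^L$ by $\otimes$), together with bimodule isomorphisms in the stable/singularity category of the form $A\otimes_B A\cong \Omega^{-n}(A)$ in $\mathcal D_{sg}(A^e)$ and $A\otimes_A A = A$ trivially, so really the content is the relation between $A\otimes_B A$ and $A$ as $A$-bimodules modulo perfect complexes. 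First I would reduce everything to a statement about the $A$-bimodule $A\otimes_B A$ by exploiting the short exact sequence $0\to B\to A\to A/B\to 0$ of $B$-bimodules.

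The key computation is a filtration argument. Tensoring $0\to B\to A\to A/B\to 0$ on the left with $A\otimes_B-$ and on the right with $-\otimes_B A$, and using Lemma~\ref{lem-ring-extension-tor} together with Lemma~\ref{lem-tensor} to guarantee that all the relevant $\otimes^L$ agree with ordinary $\otimes$ (so that the exact sequences stay exact after tensoring and there is no higher-Tor obstruction), one obtains that in $\mathcal D^b(A^e)$ the bimodule $A\otimes_B A$ is built by finitely many triangles (this is where $(A/B)^{\otimes_B p}=0$ enters — the filtration terminates after $p$ steps) out of $A$ itself and the bimodules $A\otimes_B (A/B)^{\otimes_B j}\otimes_B A$ for $1\le j\le p-1$. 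Concretely, $A\otimes_B(A/B)^{\otimes_B j}\otimes_B A \cong A\otimes_B^L (A/B)^{\otimes_B j}\otimes_B^L A \cong (A\otimes_B^L-\otimes_B^L A)\bigl((A/B)^{\otimes_B j}\bigr)$, and since $\pd_{B^e}(A/B)<\infty$, part (2) of Lemma~\ref{lem-preserve-kbproj} shows each $(A/B)^{\otimes_B j}$ lies in $K^b(\proj B^e)$ up to the shift by the syzygy operator — more precisely, applying $_B(A/B)\otimes_B^L-$ repeatedly to $B\in K^b(\proj B^e)$ keeps us in $K^b(\proj B^e)$ — and then part (1) sends it into $K^b(\proj A^e)$. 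Hence each correction term $A\otimes_B(A/B)^{\otimes_B j}\otimes_B A$ is perfect as an $A$-bimodule, so $A\otimes_B A\cong A$ in $D_{sg}(A^e)$ up to a shift, which is exactly the defining relation for a singular equivalence of Morita type with level. Symmetrically (or by the left–right symmetry of the hypotheses noted via \cite{CL20}) one checks $A\otimes_A A\cong A$, i.e. the other composite is trivially the identity.

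The remaining bookkeeping is to package these isomorphisms into the precise definition: that $(_AA_B, {}_BA_A)$ is a singular equivalence of Morita type with level $n$ means there are isomorphisms $A\otimes_B A\cong \Omega_{A^e}^{n}(A)$ in $\underline{\mod}\,A^e$ and $A\otimes_A A\cong \Omega_{B^e}^{n}(B)$ in $\underline{\mod}\,B^e$, with $A$ projective on one side over each of $A$ and $B$; the side-projectivity over $B$ fails in general, so one must use the formulation via derived tensor and the vanishing of $\Tor^B_i(A,A)$ (Lemma~\ref{lem-ring-extension-tor}(1)) to see that $A\otimes_B^L A\cong A\otimes_B A$ genuinely, and similarly $\Tor^B_i(A,(A/B)^{\otimes_B j}\otimes_B A)=0$ (part (3)) to justify the associativity manipulations in the filtration. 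I expect the main obstacle to be exactly this point: verifying that every instance of $\otimes_B$ appearing in the iterated filtration of $A\otimes_B A$ computes the derived tensor product, so that the triangles in $\mathcal D^b(A^e)$ are legitimate and the terms that should be perfect really are the images of $K^b(\proj B^e)$ under $A\otimes_B^L-\otimes_B^L A$. Once the Tor-vanishing is leveraged to collapse derived and underived tensor products at every stage, the perfectness of the correction terms and hence the singular equivalence follow formally from Lemma~\ref{lem-preserve-kbproj}.
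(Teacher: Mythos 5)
The computational core of your proposal --- filtering $A\otimes_B A$ by the bimodules $A\otimes_B(A/B)^{\otimes_B j}\otimes_B A$, $1\le j\le p-1$, and showing each of these lies in $K^b(\proj A^e)$ via Lemmas~\ref{lem-tensor}, \ref{lem-preserve-kbproj} and \ref{lem-ring-extension-tor} --- is exactly the computation in the paper (the paper obtains the filtration in one stroke from the exact sequence of \cite[Proposition 2.3]{CLMS21} rather than by iterated tensoring, an inessential difference). The genuine gap is your final step. An isomorphism $A\otimes_B A\cong A$ in $D_{sg}(A^e)$ is \emph{not} ``exactly the defining relation'' of a singular equivalence of Morita type with level: Wang's definition \cite{Wang15} requires bimodules $_AM_B$, $_BN_A$ that are projective on each side separately, together with isomorphisms $M\otimes_B N\cong\Omega^l_{A^e}(A)$ and $N\otimes_A M\cong\Omega^l_{B^e}(B)$ in the stable bimodule categories. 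Your candidate pair $({}_AA_B,\,{}_BA_A)$ fails one-sided projectivity over $B$ (the hypotheses only give $\pd_BA<\infty$ and $\pd A_B<\infty$), and you acknowledge this but do not repair it; retreating to the derived setting only yields a triangle equivalence $D_{sg}(A)\cong D_{sg}(B)$, which is strictly weaker than the statement to be proved. The paper bridges precisely this point: it packages the computation into the adjoint pair $F=A\otimes_B^L-$, $G={}_BA\otimes_A-$, shows (using $\pd_BA<\infty$, $\pd A_B<\infty$ and \cite{AKLY17}) that both restrict to $\mathcal{D}^b(\mod)$ and $K^b(\proj)$ and hence to the singularity categories, proves the unit $B\to GF(B)$ and counit $FG(A)\to A$ become isomorphisms in $D_{sg}(B^e)$ and $D_{sg}(A^e)$, and then invokes \cite[Theorem 3.6]{Dal21} together with \cite[Proposition 3.3]{Qin22} to upgrade this to a singular equivalence of Morita type with level (in effect replacing $_AA_B$ by a suitable syzygy over $A\otimes_kB^{op}$, which is projective on either side). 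Some such upgrading result must be quoted or reproved; it is not bookkeeping.

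Two further slips. First, the $B$-side condition is not ``trivially the identity'': the composite on the $B$-side is $A$ viewed as a $B$-bimodule, which must be compared with $B$, i.e.\ one needs $B\cong GF(B)$ in $D_{sg}(B^e)$; the cone of $B\to A$ is $A/B$, and this is exactly where the hypothesis $\pd_{B^e}(A/B)<\infty$ enters --- your write-up omits this entirely. Second, $B$ itself does not lie in $K^b(\proj B^e)$ in general (that would force $B$ to have finite global dimension), so the iteration of ${}_B(A/B)\otimes_B^L-$ must start from $A/B\in K^b(\proj B^e)$ (again by $\pd_{B^e}(A/B)<\infty$), not from $B$ as you wrote; with that correction your perfectness argument for the terms $A\otimes_B(A/B)^{\otimes_B j}\otimes_B A$ coincides with the paper's.
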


\begin{proof}
Consider the adjoint triple between derived categories
$$\xymatrix@!=8pc{ \mathcal{D}A \ar[r]|{G=_BA\otimes _A-} & \mathcal{D}B \ar@<-3ex>[l]_{F=A\otimes _B^L-}
\ar@<+3ex>[l] }.$$ Since $\pd _{B^e}(A/B)<\infty$, we get that $\pd (A/B)_B<\infty$ and $\pd _B(A/B)<\infty$.
Now the exact sequence of $B$-$B$ bimodules
$$0\rightarrow B\rightarrow A\rightarrow A/B\rightarrow 0$$ shows that $\pd A_B<\infty$ and $\pd _BA<\infty$.
It follows from \cite[Lemma 2.8]{AKLY17} that both $F$ and $G$ restricts to $\mathcal{D}^b(\mod )$,
and by \cite[Lemma 2.5]{AKLY17}, both $F$ and $G$ restricts to $K^b(\proj )$.
Therefore, the adjoint functors $F$ and $G$ restrict to one at the level
of singularity categories, see \cite[Lemma 1.2]{Orl04}. Now we claim that
the natural morphisms $B\rightarrow GF(B)$ and $FG(A)\rightarrow A$
are isomorphisms in $D_{sg} (B^e)$ and $D_{sg} (A^e)$, and then there is a
singular equivalence of Morita type with level between $A$ and $B$,
see \cite[Theorem 3.6]{Dal21} and \cite[Proposition 3.3]{Qin22}.

To see the claim, we observe that the mapping cone of $B\rightarrow GF(B)$ is $A/B$, and
the assumption
$\pd _{B^e}(A/B)<\infty$ yields that $B\cong GF(B)$ in $D_{sg} (B^e)$.
By Lemma~\ref{lem-ring-extension-tor} (1) and Lemma~\ref{lem-tensor},
we have that
$FG(A)=A\otimes _B^LA\cong A\otimes _BA$ in $\mathcal{D}(A^e)$,
and then mapping cone of $FG(A)\rightarrow A$ is isomorphic
to the complex $0\rightarrow A\otimes _BA\rightarrow A\rightarrow 0$
induced by the product of $A$. Since $(A/B)^{\otimes _Bp}=0$ for some integer $p$, it follows from \cite[Proposition 2.3]{CLMS21}
that there is a long exact sequence
of $A$-$A$ bimodules
$$\xymatrix@C=0.8pc{ 0 \ar[r] & A\otimes _B(A/B)^{\otimes _B{(p-1)}}\otimes _BA \ar[r] & \cdots
\ar[r] &  A\otimes _B(A/B)\otimes _BA \ar[r] &  A\otimes _BA \ar[r] & A \ar[r] & 0,}$$
where the last non-zero differential is the product of $A$.
Therefore, the nature chain map
$$\xymatrix@C=0.8pc{ 0 \ar[r] & A\otimes _B(A/B)^{\otimes _B{(p-1)}}\otimes _BA \ar[r] & \cdots
\ar[r] &  A\otimes _B(A/B)\otimes _BA \ar[r] \ar[d]& 0 \ar[d] \ar[r] &0\\
& & 0 \ar[r] & A\otimes _BA \ar[r] & A \ar[r] &0}$$
is a quasi-isomorphism, and thus the mapping cone of $FG(A)\rightarrow A$ is isomorphic
to the complex
$$\xymatrix@C=0.8pc{ 0 \ar[r] & A\otimes _B(A/B)^{\otimes _B{(p-1)}}\otimes _BA \ar[r] & \cdots
\ar[r] &  A\otimes _B(A/B)\otimes _BA \ar[r] & 0}$$
in $\mathcal{D}(A^e)$. Now we claim that $A\otimes _B(A/B)^{\otimes _Bj}\otimes _BA \in
K^b(\proj A^e)$ for any $j\geq 1$, and thus $FG(A)\cong A$
in $D_{sg} (A^e)$. Since $\Tor _i^B(A/B, (A/B)^{\otimes _Bj})=0$ for each $i,j\geq 1$,
it follows from Lemma~\ref{lem-tensor} that $(A/B)\otimes _B
(A/B)\otimes _B\cdots \otimes _B
(A/B)\cong (A/B)\otimes _B^L
(A/B)\otimes _B^L\cdots \otimes _B^L
(A/B)$ in $\mathcal{D}(B^e)$, which belongs to $K^b(\proj B^e)$ by Lemma~\ref{lem-preserve-kbproj} (2).
Therefore, $(A/B)^{\otimes _Bj}\in K^b(\proj B^e)$ for any $j\geq 1$,
and it follows from Lemma~\ref{lem-preserve-kbproj} (1) that $A\otimes _B^L(A/B)^{\otimes _Bj}\otimes _B^LA \in
K^b(\proj A^e)$. On the other hand, Lemma~\ref{lem-ring-extension-tor} (2), Lemma~\ref{lem-ring-extension-tor} (3)
and Lemma~\ref{lem-tensor} yield that $A\otimes _B^L(A/B)^{\otimes _Bj}\otimes _B^LA
\cong A\otimes _B(A/B)^{\otimes _Bj}\otimes _BA$. Therefore, $A\otimes _B(A/B)^{\otimes _Bj}\otimes _BA \in
K^b(\proj A^e)$ for any $j\geq 1$.
\end{proof}

\begin{remark}\label{rem-ext}
{\rm By \cite[Corollary 4.3]{CL20}, the Tor-vanishing condition in Theorem~\ref{theorem-sing-equiv}
is equivalent to $\Tor _i^B((A/B)^{\otimes _Bj}, A/B)=0$ for each $i,j\geq 1$.
Therefore, the extensions considered in Theorem~\ref{theorem-sing-equiv} are strictly bigger than
bounded extensions. For example, let $A=
\left(\begin{array}{cc} B & 0 \\ M & C  \end{array}\right)$ be a triangular matrix algebra
with $M$ a $C$-$B$-bimodule. Then $A\cong (B\times C)\ltimes M$ and there is an extension
$B\times C\hookrightarrow A$ with $A/(B\times C)=M$.
Since $M\mathop{\otimes}\limits_{B\times C}M=0$
and $\Tor _i^{B\times C}(M, M)=0$ for $i\geq 1$, it follows that the extension
$B\times C\hookrightarrow A$ satisfies all conditions in Theorem~\ref{theorem-sing-equiv}
if $\pd _{C\otimes B^{op}}M<\infty$. However, this extension is not bounded
unless $M$ is projective as a left $C$-module or a right $B$-module.}
\end{remark}

Recall that Han's conjecture \cite[Conjecture 1]{Han06}
asserts that the $n$-th Hochschild homology $\HH_n(A)$ vanishes for sufficiently
large $n$ if and only if $A$ has finite global dimension.
In \cite{CLMS22}, Cibils et al. proved that Han's conjecture is preserved
under bounded extensions. Now we
will generalize their result as follow.

\begin{corollary}\label{cor-Han-conj}
Let $A$ and $B$
be two algebras and $B\subseteq A$ be a ring extension. Assume that
$\pd _{B^e}(A/B)<\infty$, $(A/B)^{\otimes _Bp}=0$ for some integer $p$ and
$\Tor _i^B(A/B, (A/B)^{\otimes _Bj})=0$ for each $i,j\geq 1$.
Then $A$ satisfies Han's conjecture if and only if $B$ does.
\end{corollary}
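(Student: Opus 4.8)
The plan is to obtain this as a formal consequence of Theorem~\ref{theorem-sing-equiv} together with the known invariance of Hochschild homology under singular equivalences of Morita type with level. The hypotheses of the corollary are exactly those of Theorem~\ref{theorem-sing-equiv}, so first I would apply that theorem to produce a singular equivalence of Morita type with level between $A$ and $B$. Such an equivalence is given by bimodules whose tensoring induces mutually inverse triangle functors on the singularity categories, hence in particular a triangle equivalence $D_{sg}(A)\simeq D_{sg}(B)$; so $D_{sg}(A)=0$ if and only if $D_{sg}(B)=0$. Since $D_{sg}(C)=0$ is equivalent to $C$ having finite global dimension, this yields that $\gl A<\infty$ if and only if $\gl B<\infty$.

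Next I would invoke \cite{Wang15}, where it is shown that a singular equivalence of Morita type with level preserves Hochschild homology in all sufficiently high degrees; in particular $\HH_n(A)=0$ for $n\gg 0$ if and only if $\HH_n(B)=0$ for $n\gg 0$. Only this high-degree vanishing is needed, not a degreewise isomorphism of homology groups.

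Finally, recall that for any algebra $C$ the implication ``$\gl C<\infty \Rightarrow \HH_n(C)=0$ for $n\gg 0$'' always holds, so that $C$ satisfies Han's conjecture precisely when the converse ``$\HH_n(C)=0$ for $n\gg 0 \Rightarrow \gl C<\infty$'' holds. Combining the two equivalences established above, this converse holds for $A$ if and only if it holds for $B$, which is exactly the assertion. The argument is essentially routine once Theorem~\ref{theorem-sing-equiv} is in hand; the only points requiring care are citing \cite{Wang15} for the precise form of the homological invariance and recalling that the singularity category detects finite global dimension.
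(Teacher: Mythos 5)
Your proposal is correct and follows essentially the same route as the paper: apply Theorem~\ref{theorem-sing-equiv} to get a singular equivalence of Morita type with level, cite \cite{Wang15} for the invariance of Hochschild homology (the paper uses \cite[Proposition 3.8]{Wang15}, which gives $\HH_i(A)\cong\HH_i(B)$ for all $i>0$, slightly stronger than the high-degree vanishing you invoke), and use that a singular equivalence transfers finiteness of global dimension. No gaps.
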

\begin{proof}
By Theorem~\ref{theorem-sing-equiv}, there is a singular
equivalence of Morita type with level between $A$ and $B$.
It follows from \cite[Proposition 3.8]{Wang15} that $\HH_i(A)\cong
\HH_i(B)$ for any $i>0$. Since $A$ and $B$ are singularly equivalent,
we have that $A$ has finite global dimension if and only if $B$ does.
Therefore, we infer that $A$ satisfies Han's conjecture if and only if $B$ does.
\end{proof}

Next, we will compare the Gorenstein defect categories
between two algebras linked by a ring extension. We start with the following lemma.
\begin{lemma}\label{lemma-restr-Gproj}
Let $G:\mod A\rightarrow \mod B$ be an exact functor with $\pd _BG(A)< \infty$.
Assume there is an integer $t$ such that $\Omega ^t(GX)\in {}^{\perp} B$ for any $X\in {}^{\perp}A$. Then
there is an integer $l$ such that $\Omega ^l(GX)\in \Gproj B$ for any $X\in \Gproj A$, and
$G$ induces a triangle functor from $D^b(A)_{fGd}$ to $D^b(B)_{fGd}$.
\end{lemma}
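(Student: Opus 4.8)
The plan is to first produce the integer $l$ governing Gorenstein projectivity, and then bootstrap this into a statement about the thick subcategories $D^b(-)_{fGd}$. For the first part, let $X \in \Gproj A$. Then $\Omega_A^i X \in {}^{\perp}A$ for every $i \geq 0$ (indeed $\Gproj A = \bigcap_{i\geq 0}\Omega_A^i({}^{\perp}A)$, and in particular each syzygy of a Gorenstein projective module lies in ${}^{\perp}A$). By hypothesis, applying $G$ and taking $t$ syzygies over $B$ lands us in ${}^{\perp}B$: that is, $\Omega_B^t(GX) \in {}^{\perp}B$. But I need $GX$, after finitely many syzygies, to be genuinely Gorenstein projective, not merely totally $B$-reflexive on the right. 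The point is that $X$ is two-sided Gorenstein, so I can also feed $G$ a complete resolution. Concretely, take a complete projective resolution $P^\bullet$ of $X$ over $A$; since $G$ is exact and $\pd_B G(A) < \infty$, applying $G$ to $P^\bullet$ gives an acyclic complex of $B$-modules of finite projective dimension, and a standard truncation/horseshoe argument replaces it by an acyclic complex of projective $B$-modules. Splicing in the ${}^{\perp}B$-condition on both the left (syzygies) and — using that the cosyzygies $\Omega_A^{-i}X$ are again in $\Gproj A$, hence their images satisfy the same ${}^{\perp}B$-estimate — on the right, one obtains that some fixed syzygy $\Omega_B^l(GX)$ admits a complete resolution, i.e. lies in $\Gproj B$. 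The integer $l$ can be taken to be $t + \pd_B G(A)$, uniformly in $X$ (here one uses that $\pd_B G(P) \leq \pd_B G(A)$ for every $P \in \proj A$, since $G$ is additive and exact).

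For the second part, recall from the excerpt that $D^b(A)_{fGd} = \thick(\Gproj A)$ and likewise for $B$. Since $G \colon \mod A \to \mod B$ is exact, it extends to a triangle functor $D^b(A) \to D^b(B)$ (apply it degreewise to complexes). It therefore suffices to check that this extension carries the generating set $\Gproj A$ into $D^b(B)_{fGd} = \thick(\Gproj B)$. Given $X \in \Gproj A$, the first part supplies $l$ with $\Omega_B^l(GX) \in \Gproj B$. Unwinding the syzygy: there is an exact sequence $0 \to \Omega_B^l(GX) \to Q^{l-1} \to \cdots \to Q^0 \to GX \to 0$ with each $Q^i \in \proj B$, which in $D^b(B)$ exhibits $GX$ as an iterated extension of $\Omega_B^l(GX) \in \Gproj B$ and the $Q^i \in \proj B \subseteq \Gproj B$. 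Hence $GX \in \thick(\Gproj B) = D^b(B)_{fGd}$. Since $D^b(B)_{fGd}$ is a thick (in particular triangulated, summand-closed) subcategory and $\Gproj A$ generates $D^b(A)_{fGd}$ as such, the triangle functor $G$ restricts to $G \colon D^b(A)_{fGd} \to D^b(B)_{fGd}$, as desired.

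The main obstacle is the first part — upgrading the one-sided hypothesis "$\Omega^t(GX) \in {}^{\perp}B$" to the two-sided conclusion "$\Omega^l(GX) \in \Gproj B$". The subtlety is that ${}^{\perp}B$ only controls $\Ext^{>0}_B(-, B)$, whereas Gorenstein projectivity additionally requires the existence of an acyclic \emph{co}resolution that stays acyclic after $\Hom_B(-, B)$. The resolution of this is to exploit that $X$ itself is two-sided Gorenstein projective — so it has a complete resolution $P^\bullet$ over $A$ — and that $G$ is exact with $G(\proj A)$ of bounded finite projective dimension over $B$; these two facts together let one transport the completeness of $P^\bullet$ through $G$, after a bounded truncation, to produce a complete $B$-resolution of a high enough syzygy of $GX$. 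One should double-check the uniformity of $l$ (it must not depend on $X$), which follows because both $t$ and the bound $\pd_B G(A)$ are independent of $X$.
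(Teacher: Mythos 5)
Your argument is correct in substance, and it is worth noting that the paper itself gives no internal proof of this lemma at all: its proof is the single sentence that the statement ``can be proved similarly as [QS23, Lemma 3.8]'', so your proposal essentially supplies the syzygy argument that citation stands in for. The one place where you stay at the level of a slogan (``a standard truncation/horseshoe argument replaces it by an acyclic complex of projective $B$-modules'') is the real content, and it does complete as you indicate; to record it: write $X_i$ for the cosyzygies of $X$ (all in $\Gproj A\subseteq{}^{\perp}A$, with $X_0=X$) and apply the horseshoe lemma to each exact sequence $0\to GX_i\to GP^i\to GX_{i+1}\to 0$, fixing once and for all a projective resolution of each $GX_i$. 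Since $\pd _B GP^i\le d:=\pd _B G(A)$, for $j\ge d$ the middle syzygy is projective, so one gets exact sequences $0\to\Omega^j(GX_i)\to Q_i\to\Omega^j(GX_{i+1})\to 0$ with $Q_i\in\proj B$, and these splice (the fixed resolutions make the end terms literally agree). Splicing further with a projective resolution of $\Omega^j(GX)$ yields, for $j\ge\max(t,d)$, an acyclic complex of projective $B$-modules all of whose cycles lie in ${}^{\perp}B$ (your hypothesis applied to $X$ and to each $X_i$, plus closure of ${}^{\perp}B$ under syzygies), and the vanishing of $\Ext^1_B$ on the cycles is precisely what makes the complex totally acyclic; hence $\Omega^l(GX)\in\Gproj B$ uniformly for $l=\max(t,d)$, so your choice $l=t+d$ also works. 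Two small caveats: the parenthetical identity $\Gproj A=\bigcap_{i\ge0}\Omega_A^i({}^{\perp}A)$ is neither needed nor something to assert casually --- you only use the trivial facts that syzygies and cosyzygies of Gorenstein projective modules are Gorenstein projective, hence lie in ${}^{\perp}A$; and the phrase ``admits a complete resolution'' should explicitly include the $\Hom_B(-,B)$-acyclicity check just described, since that is exactly the two-sidedness issue you flag. Your reduction of the second assertion to showing $G(\Gproj A)\subseteq\thick(\Gproj B)=D^b(B)_{fGd}$, using $D^b(A)_{fGd}=\thick(\Gproj A)$ and thickness of the preimage under the degreewise extension of $G$ to $D^b(A)$, is correct as written.
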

\begin{proof}
This can be proved similarly as \cite[Lemma 3.8]{QS23}.
\end{proof}

Now we will prove the second part of the main theorem presented in the introduction.
\begin{theorem}\label{theorem-def-equiv}
Let $B\subseteq A$ be a split extension with all the conditions in Theorem~\ref{theorem-sing-equiv}
satisfied.
Then the
singular equivalence in Theorem~\ref{theorem-sing-equiv} induces an equivalence on the Gorenstein defect categories.
\end{theorem}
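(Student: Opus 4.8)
The plan is to show that the two quasi-inverse functors realising the singular equivalence of Theorem~\ref{theorem-sing-equiv} preserve the subcategories of complexes of finite Gorenstein projective dimension, and then to descend the equivalence to the Verdier quotients. Recall that $D^b(A)_{fGd}=\thick(\Gproj A)\supseteq K^b(\proj A)$, so $\Im F$ is the image of $D^b(A)_{fGd}$ in $D_{sg}(A)$ and $D_{def}(A)\cong\mathcal{D}^b(A)/D^b(A)_{fGd}$ (and likewise for $B$). The equivalence of Theorem~\ref{theorem-sing-equiv} is realised by the restriction functor $G={}_BA\otimes_A-$ and by $F=A\otimes_B^L-$, which are adjoint, restrict to $K^b(\proj)$, and induce quasi-inverse triangle equivalences $\bar G\colon D_{sg}(A)\to D_{sg}(B)$, $\bar F\colon D_{sg}(B)\to D_{sg}(A)$ with natural isomorphisms $GF\cong\id$ and $FG\cong\id$ in the singularity categories. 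So the whole task reduces to proving
\[
G\bigl(D^b(A)_{fGd}\bigr)\subseteq D^b(B)_{fGd}\qquad\text{and}\qquad F\bigl(D^b(B)_{fGd}\bigr)\subseteq D^b(A)_{fGd};
\]
once both inclusions hold, the universal property of the Verdier quotient produces induced triangle functors between $D_{def}(A)$ and $D_{def}(B)$, and the isomorphisms $GF\cong\id$, $FG\cong\id$ descend.

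For $F$, which uses only $\pd_{B^e}(A/B)<\infty$: since $D^b(B)_{fGd}=\thick(\Gproj B)$ and $F$ is a triangle functor, it suffices to prove $F(\Gproj B)\subseteq\Gproj A$. From $0\to B\to A\to A/B\to0$ one has $\pd{}_BA<\infty$ and $\pd A_B<\infty$; a dimension shift along the short exact sequences $0\to Z^n\to P^n\to Z^{n+1}\to0$ coming from a complete resolution, together with $\pd A_B<\infty$, shows $\Tor_i^B(A,N)=0$ for all $i\geq1$ and every Gorenstein projective $B$-module $N$. Given $M\in\Gproj B$ with complete resolution $P^\bullet$, this makes $A\otimes_BP^\bullet$ an acyclic complex of projective $A$-modules with $0$-cocycle $A\otimes_BM$; and $\Hom_A(A\otimes_BP^\bullet,Q)\cong\Hom_B(P^\bullet,{}_BQ)$ is acyclic for $Q\in\proj A$ because ${}_BQ$ has finite projective dimension over $B$. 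Hence $A\otimes_BP^\bullet$ is a complete resolution over $A$, so $F(M)=A\otimes_B^LM\cong A\otimes_BM\in\Gproj A$.

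For $G$ I would invoke Lemma~\ref{lemma-restr-Gproj}: restriction $G\colon\mod A\to\mod B$ is exact with $\pd_BG(A)=\pd{}_BA<\infty$, so it is enough to find an integer $t$ with $\Omega_B^t(GX)\in{}^{\perp}B$ for every $X\in{}^{\perp}A$, i.e. $\Ext_B^i({}_BX,{}_BB)=0$ for $i>t$, uniformly in $X$. This is exactly where splitness is needed, and I expect it to be the main obstacle. Using an algebra retraction $A\to B$, identify $A/B$ with a two-sided ideal $M$ of $A$ with $M^p=0$ and $\pd_{B^e}M=\pd_{B^e}(A/B)<\infty$, lying in a short exact sequence of $A$-bimodules $0\to M\to A\to B\to0$. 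Since $A\cong B\oplus M$ as $B$-bimodules, $\Ext_B^i({}_BX,{}_BB)$ is a direct summand of $\Ext_B^i({}_BX,{}_BA)$, which by the restriction--coinduction adjunction is governed by $\RHom_A\bigl(X,\RHom_B({}_BA_A,{}_BA_A)\bigr)$. Feeding the $A$-bimodule sequence above through $\RHom_B(-,{}_BA_A)$, using $\pd_{B^e}M<\infty$, and dimension-shifting against $X\in{}^{\perp}A$ (which annihilates $\Ext_A^{>0}(X,-)$ on modules of finite projective dimension over $A$) should bound $\Ext_B^i({}_BX,{}_BB)$ uniformly by a quantity depending only on $\pd_{B^e}(A/B)$ and $\pd{}_BA$. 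With such a $t$, Lemma~\ref{lemma-restr-Gproj} gives a triangle functor $G\colon D^b(A)_{fGd}\to D^b(B)_{fGd}$.

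With both inclusions in place, the universal property of Verdier localisation produces triangle functors $\widehat G\colon D_{def}(A)\to D_{def}(B)$ and $\widehat F\colon D_{def}(B)\to D_{def}(A)$ compatible with the quotient functors, and the natural isomorphisms $GF\cong\id$, $FG\cong\id$ of Theorem~\ref{theorem-sing-equiv} descend to $\widehat F\widehat G\cong\id$, $\widehat G\widehat F\cong\id$; hence $\widehat G$ and $\widehat F$ are quasi-inverse equivalences between the Gorenstein defect categories, which is the claim.
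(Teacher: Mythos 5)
Your overall architecture (show that $F=A\otimes_B^L-$ and $G={}_BA\otimes_A-$ preserve the subcategories $D^b(-)_{fGd}$ and then descend the singular equivalence of Theorem~\ref{theorem-sing-equiv} to the Verdier quotients) is the same as the paper's, and two of your three steps are sound: your direct verification that $A\otimes_B-$ sends $\Gproj B$ into $\Gproj A$ (Tor-vanishing against complete resolutions plus the adjunction isomorphism $\Hom_A(A\otimes_BP^\bullet,Q)\cong\Hom_B(P^\bullet,{}_BQ)$, using only $\pd A_B<\infty$ and $\pd{}_BA<\infty$) is a correct and more self-contained alternative to the paper's appeal to stable functors \cite{HP17} and \cite{CHQW20}, and your final descent via the universal property of Verdier localisation is legitimate (the paper instead shows $\widetilde G$ is fully faithful and dense and invokes \cite[Lemma 3.10]{QS23}).

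The genuine gap is exactly where you anticipated it: the uniform vanishing $\Ext_B^i(GX,B)=0$ for $i>t$ and all $X\in{}^{\perp}A$, which is what Lemma~\ref{lemma-restr-Gproj} needs. Your proposed mechanism does not deliver it. After the restriction--coinduction adjunction you must control $\RHom_B({}_BA_A,{}_BA)$ as a complex of left $A$-modules, and feeding $0\to M\to A\to B\to 0$ through $\RHom_B(-,{}_BA)$ produces terms such as $\Hom_B({}_BB_A,{}_BA)$, whose left $A$-action factors through the retraction $\pi\colon A\to B$; such inflated modules have infinite projective dimension over $A$ in general, so the dimension shift ``$X\in{}^{\perp}A$ kills $\Ext_A^{>0}(X,-)$ on modules of finite projective dimension over $A$'' simply does not apply to them, and no bound depending only on $\pd_{B^e}(A/B)$ and $\pd{}_BA$ emerges. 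Tellingly, your sketch never uses the hypotheses $(A/B)^{\otimes_Bp}=0$ and $\Tor_i^B(A/B,(A/B)^{\otimes_Bj})=0$, whereas the paper's proof of precisely this step uses them in an essential way: it works with the tensor adjunction $F\dashv G$ rather than coinduction, writes $\Ext_B^i(GX,B)$ as a direct summand of $\Hom_{\mathcal{D}B}(GX,GA[i])\cong\Hom_{\mathcal{D}A}(FGX,A[i])$ (this is where splitness enters), and compares $FGX$ with $X$ via the counit triangle: since the proof of Theorem~\ref{theorem-sing-equiv} shows $\Cone(\eta_A)\in K^b(\proj A^e)$ (this is where nilpotency and Tor-vanishing enter), the cone $\Cone(\eta_X)\cong\Cone(\eta_A)\otimes_AX$ is a bounded complex of projective $A$-modules of fixed width $t$, whence $\Hom_{\mathcal{D}A}(FGX,A[i])\cong\Hom_{\mathcal{D}A}(X,A[i])=0$ for $i>t$. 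You would need to replace your coinduction paragraph by this (or an equivalent) argument; as written, the central vanishing statement driving the $G$-direction is unproven.
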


\begin{proof}
It is easy to see that the functor $G={_BA}\otimes _A-: \mod A\rightarrow \mod B$ is exact and
$\pd _BG(A)< \infty$.
Now we claim that there is an integer $t$ such that $\Omega ^t(GX)\in {}^{\perp} B$ for any $X\in {}^{\perp}A$,
and then Lemma~\ref{lemma-restr-Gproj} can be applied.

Since the extension is split, we have that $_BB$ is a direct summand of $_BA$.
For any $X\in {}^{\perp}A$ and $i>0$, consider the isomorphism $$\Ext _B^i(GX,B)\cong
\Hom _{\mathcal{D}B}(GX, B[i]),$$
which is a direct summand of $\Hom _{\mathcal{D}B}(GX, GA[i])$.
By adjointness, we have that $$\Hom _{\mathcal{D}B}(GX, GA[i])\cong \Hom _{\mathcal{D}A}(FGX, A[i]).$$
Let $\eta : FG\rightarrow 1_{\mathcal{D}A}$ be the counit of the adjoint pair
$$\xymatrix{ \mathcal{D}B \ar@<+1ex>[rr]^{F}
&&\mathcal{D}A\ar@<+1ex>[ll]^{G}}.$$
Then there is a canonical triangle
$$\xymatrix{FGX \ar[r]^{\eta _X}  & X
 \ar[r] & \Cone (\eta _X) \ar[r] & }$$
in $\mathcal{D}A$.
Applying $\Hom _{\mathcal{D}A}(-, A[i])$, we get
a long exact
sequence
$$\xymatrix@C=0.5pc{\cdots \ar[r] & \Hom _{\mathcal{D}A}(\Cone (\eta _X), A[i]) \ar[r] &
\Hom _{\mathcal{D}A}(X, A[i]) \ar[r] & \Hom _{\mathcal{D}A}(FGX, A[i])\ar[r] & \cdots  (\star).
}$$
Since $FGX\cong FGA\otimes _AX$, it follows from the commutative diagram
 $$\xymatrix@C=4pc{FGX \ar[r]^{\eta _X} \ar[d]^{\cong} & X
\ar[d]^{\cong} \ar[r] & \Cone (\eta _X) \ar[r]\ar[d] &
\\ FGA\otimes _AX \ar[r]^{\eta _A\otimes _AX }  & A\otimes _AX \ar[r]  & \Cone (\eta _A)\otimes _AX \ar[r] & }$$
that $\Cone (\eta _X)\cong \Cone (\eta _A)\otimes _AX $ in $\mathcal{D}A$.
According to the proof of Theorem~\ref{theorem-sing-equiv}, we see that $\Cone (\eta _A)\in K^b(\proj A^e)$.
Then there is a bounded complex of projective $A$-$A$ bimodules
$$P^\bullet: 0\rightarrow P^{-t}\longrightarrow \cdots
\longrightarrow P^{s} \longrightarrow
0 $$
such that $P^\bullet \cong \Cone (\eta _A)$ in $\mathcal{D}(A^e)$.
Therefore, the complex $\Cone (\eta _X)\cong P^\bullet \otimes _AX $
is of the form $$0\rightarrow P^{-t}\otimes _AX \longrightarrow \cdots
\longrightarrow P^{s}\otimes _AX \longrightarrow
0 $$ where each $_AP^{i}\otimes _AX\in \proj A$ since $P^{i}\in \proj A^e$.
It follows that $$\Hom _{\mathcal{D}A}(\Cone (\eta _X), A[i])\cong \Hom _{K(A)}(P^\bullet \otimes _AX, A[i])$$
which is equal to zero for any $i>t$. Therefore, the long exact sequence $(\star)$
yields that $\Hom _{\mathcal{D}A}(FGX, A[i]) \cong \Hom _{\mathcal{D}A}(X, A[i]) $
for any $i>t$. On the other hand, $\Hom _{\mathcal{D}A}(X, A[i])\cong
\Ext_A^i(X,A)=0$ for any $i>0$ since $X\in {}^{\perp}A$.
As a result, we get that $\Hom _{\mathcal{D}A}(FGX, A[i])=0$ for any $i>t$,
and thus $\Ext _B^i(GX,B)=0$ for any $i>t$. Therefore, $\Omega ^t(GX)\in {}^{\perp} B$ for any $X\in {}^{\perp}A$
and by Lemma~\ref{lemma-restr-Gproj}, $G$ induces a triangle functor from $D^b(A)_{fGd}$ to $D^b(B)_{fGd}$.

On the other hand, it follows from \cite[Lemma 2.8]{AKLY17} that $F=A\otimes _B^L-$
has a left adjoint which restricts to $K^b(\proj )$, and by \cite[Lemma 3.4]{CHQW20}, $F$ restricts to
a non-negative functor from $\mathcal{D}^b(B)$ to $\mathcal{D}^b(A)$, up to shifts.
By \cite[Proposition 5.2]{HP17},
the stable functor $\overline{F}$ preserves Gorenstein projective
modules, and according to \cite[Section 4.2]{HP17},
each $X\in \mod B$ yields a triangle $$P^\bullet _X\rightarrow F(X)\rightarrow
\overline{F}(X)\rightarrow$$ in $\mathcal{D}A$ with $P^\bullet _X\in K^b(\proj A)$.
Therefore, $FX \in D^b(A)_{fGd}$ for any $X\in \Gproj B$, and then
$F$ sends the objects of $D^b(B)_{fGd}$ to $D^b(A)_{fGd}$. Therefore, $F$ and $G$ induce an adjoint pair between
$D^b(B)_{fGd}/ K^b(\proj B)$ and $D^b(A)_{fGd}/ K^b(\proj A)$, see \cite[Lemma 1.2]{Orl04}.
Thanks to the equivalence
$$ \underline{\Gproj }A \cong D^b(A)_{fGd}/K^b(\proj A),$$ we obtain
an adjoint pair
$$\xymatrix{ \underline{\Gproj} B\ar@<+1ex>[rr]^{\widetilde{F}}
&&\underline{\Gproj} A\ar@<+1ex>[ll]^{\widetilde{G}}},$$
and combining Theorem~\ref{theorem-sing-equiv}, we have the following exact commutative diagram
$$\xymatrix{0 \ar[r] &\underline{\Gproj} A \ar[r] \ar@<-1ex>[d]_ {\widetilde{G}} & D_{sg}(A)
\ar@<-1ex>[d]_G \ar[r] & D_{def}(A) \ar[r] & 0
\\ 0 \ar[r] &\underline{\Gproj} B \ar[r] \ar@<-1ex>[u]_{\widetilde{F}} & D_{sg}(B) \ar[r] \ar@<-1ex>[u]_{F} & D_{def}(B) \ar[r] & 0 }, $$
where the vertical functors between $D_{sg}(A)$ and $D_{sg}(B)$ are equivalences.
Since $G$ is fully faithful, we infer that $\widetilde{G}$ is also fully faithful
from the above exact commutative diagram. Now we claim that $\widetilde{G}$ is dense
and then $\widetilde{G}: \underline{\Gproj }A\rightarrow \underline{\Gproj }B$ is an equivalence.
Indeed, for any $M\in \underline{\Gproj }B$, there is an isomorphism $M\cong\widetilde{G} \widetilde{F}(M)$
in $\underline{\Gproj }B$ because $M\cong G F (M)$ in $D_{sg}(B)$.
To conclude, we get that both $G$ and $\widetilde{G}$ are triangle equivalences, and then we infer
$D_{def}(A)\cong D_{def}(B)$ by the \cite[Lemma 3.10]{QS23}.
\end{proof}

\section{Applications and examples}
\indent\indent
In this section, we will apply our main result to trivial extensions, Morita rings
and triangular matrix algebras, and we get several reduction methods
on singularity categories and Gorenstein defect categories.

\begin{corollary}\label{cor-trivial-extension}
Let $R$ be an algebra and $T$ be the trivial extension $R\ltimes M$ for a $R$-$R$-bimodule $M$.
Assume that $\pd _{R^e}M<\infty$, $M^{\otimes _Rp}=0$ for some integer $p$ and
$\Tor _i^R(M, M^{\otimes _Rj})=0$ for each $i,j\geq 1$.
Then there are triangle equivalences
$$ D_{sg}(R) \cong D_{sg}(T) \ \ \mbox{and} \ \ D_{def}(R)\cong D_{def}(T).$$
\end{corollary}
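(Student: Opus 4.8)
The plan is to obtain both equivalences as instances of Theorem~\ref{theorem-sing-equiv} and Theorem~\ref{theorem-def-equiv}, applied to the ring extension $R\subseteq T$. Recall that the trivial extension $T=R\ltimes M$ has underlying $k$-vector space $R\oplus M$ with multiplication $(r,m)(r',m')=(rr',\,rm'+mr')$. The map $r\mapsto(r,0)$ is an algebra monomorphism $R\hookrightarrow T$, and the projection $(r,m)\mapsto r$ is an algebra homomorphism $T\rightarrow R$ splitting it; hence $R\subseteq T$ is a split extension. Identifying $R$ with its image in $T$, we have $T/R\cong M$, and this is an isomorphism of $R$-$R$-bimodules: from the multiplication rule in $T$, the left and right actions of $r\in R$ on the class of $(0,m)$ are the classes of $(0,rm)$ and $(0,mr)$, so the induced $R$-bimodule structure on $T/R$ is exactly the original one on $M$.

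First I would transport the three hypotheses of the Corollary to the extension $R\subseteq T$ via the identification $T/R\cong M$ above. This gives $\pd_{R^e}(T/R)=\pd_{R^e}M<\infty$, $(T/R)^{\otimes_R p}\cong M^{\otimes_R p}=0$, and $\Tor_i^R(T/R,(T/R)^{\otimes_R j})\cong\Tor_i^R(M,M^{\otimes_R j})=0$ for all $i,j\geq1$. Thus all hypotheses of Theorem~\ref{theorem-sing-equiv} hold, and we obtain a singular equivalence of Morita type with level between $T$ and $R$; in particular $D_{sg}(R)\cong D_{sg}(T)$.

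Since moreover the extension $R\subseteq T$ is split, Theorem~\ref{theorem-def-equiv} applies and shows that this singular equivalence induces an equivalence $\underline{\Gproj}\,R\cong\underline{\Gproj}\,T$ compatible with the embeddings $\underline{\Gproj}\,R\hookrightarrow D_{sg}(R)$ and $\underline{\Gproj}\,T\hookrightarrow D_{sg}(T)$; passing to the Verdier quotients by the images of these embeddings yields $D_{def}(R)\cong D_{def}(T)$.

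I do not anticipate any real obstacle: the substantive work is entirely in Theorems~\ref{theorem-sing-equiv} and~\ref{theorem-def-equiv}, and all that remains is the elementary verification that $R\subseteq T$ is split with $T/R\cong M$ as $R$-$R$-bimodules. The only point that deserves an explicit word is that the bimodule structure on the quotient $T/R$ coincides with the given one on $M$ rather than a twist of it, which is immediate once one observes that $R\cdot M$ and $M\cdot R$ lie in the $M$-component of $T=R\oplus M$.
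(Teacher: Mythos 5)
Your proposal is correct and follows essentially the same route as the paper: identify $R\hookrightarrow T$, $r\mapsto(r,0)$, as a split extension with retraction $(r,m)\mapsto r$ and $T/R\cong M$ as $R$-$R$-bimodules, then invoke Theorem~\ref{theorem-sing-equiv} and Theorem~\ref{theorem-def-equiv}. The extra verification you include (that the bimodule structure on $T/R$ is the given one on $M$) is a harmless elaboration of what the paper treats as immediate.
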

\begin{proof}
Clearly, the algebra inclusion $R\hookrightarrow T$ defined by $r\mapsto (r,0)$
has a retraction $T\rightarrow R$ given by $(r,m)\mapsto r$. Therefore, the extension
$R\subseteq T$ splits with $T/R=M$. Now the statement follows from Theorem~\ref{theorem-sing-equiv}
and Theorem~\ref{theorem-def-equiv}.
\end{proof}

Let $B$ and $C$ be algebras, $M$ be a $C$-$B$-bimodule and $A=
\left(\begin{array}{cc} B & 0 \\ M & C  \end{array}\right) $
be a triangular matrix algebra. In \cite{Lu17}, Lu proved that
if $M$ is projective as a $C$-$B$-bimodule, then there are triangle equivalences
$ D_{sg}(A) \cong D_{sg}(B)\coprod  D_{sg}(C)$ and $D_{def}(A)\cong D_{def}(B)\coprod D_{def}(C)$,
see the proof of \cite[Proposition 4.2 and Theorem 4.4]{Lu17}. Now we will generalize
these resulst as follow.

\begin{corollary}\label{cor-tri-matrix-alg}
Keep the above notations.
If $\pd _{C\otimes B^{op}}M<\infty$,
then there are triangle equivalences
$$ D_{sg}(A) \cong D_{sg}(B)\coprod  D_{sg}(C)\ \ \mbox{and} \ \ D_{def}(A)\cong D_{def}(B)\coprod D_{def}(C).$$
\end{corollary}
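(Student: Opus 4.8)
The plan is to realize the triangular matrix algebra $A$ as a trivial extension and then invoke Corollary~\ref{cor-trivial-extension}. As already observed in Remark~\ref{rem-ext}, writing $R:=B\times C$ and viewing $M$ as an $R$-$R$-bimodule (with $R$ acting on the left through its projection onto $C$ and on the right through its projection onto $B$), one has an algebra isomorphism $A\cong R\ltimes M$. So the first step is to spell out this identification carefully: the bimodule structure of $M$ over $R=B\times C$, and the fact that $R^e=R\otimes_k R^{\op}\cong (B\times C)\otimes_k(B^{\op}\times C^{\op})$ decomposes as a product of four factors, among which $M$ is supported only on the $C\otimes B^{\op}$ factor. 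Consequently $\pd_{R^e}M=\pd_{C\otimes B^{\op}}M<\infty$ by hypothesis, $M\otimes_R M=0$ (since a left-$B$/right-$C$ versus left-$C$/right-$B$ mismatch kills the tensor product), and hence trivially $M^{\otimes_R j}=0$ for $j\geq 2$ and $\Tor_i^R(M,M^{\otimes_R j})=0$ for all $i,j\geq 1$. Thus all hypotheses of Corollary~\ref{cor-trivial-extension} are met, giving $D_{sg}(A)\cong D_{sg}(R)$ and $D_{def}(A)\cong D_{def}(R)$.

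The second step is the elementary fact that singularity categories and Gorenstein defect categories split over a product of algebras: $D_{sg}(B\times C)\cong D_{sg}(B)\coprod D_{sg}(C)$ and likewise $D_{def}(B\times C)\cong D_{def}(B)\coprod D_{def}(C)$. This is standard — $\mod(B\times C)\cong \mod B\times \mod C$, perfect complexes decompose accordingly, and the Gorenstein projective modules over a product are exactly pairs of Gorenstein projective modules — so it can simply be cited or stated with a one-line justification. Composing the two equivalences yields the desired conclusion.

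I do not anticipate a genuine obstacle here; the content is entirely in the reduction. The one point that needs care is verifying the vanishing $M^{\otimes_R j}=0$ and $\Tor_i^R(M,M^{\otimes_R j})=0$ rigorously from the product structure of $R$, i.e.\ keeping track of which idempotent of $R$ acts on which side of $M$ — this is the step where a sloppy argument could go wrong, though the underlying reason (the two-sidedness of $M$ is "off-diagonal" in $R=B\times C$) is transparent. Everything else — finiteness of projective dimension over $R^e$, and the splitting over a product — is routine.

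\begin{proof}
Set $R:=B\times C$ and regard $M$ as an $R$-$R$-bimodule via the canonical projections $R\twoheadrightarrow C$ (acting on the left) and $R\twoheadrightarrow B$ (acting on the right). Then $A\cong R\ltimes M$, and the algebra inclusion $R\hookrightarrow A$ is split with $A/R\cong M$ as in Remark~\ref{rem-ext}. Since $R^e\cong(B\otimes_kB^{\op})\times(B\otimes_kC^{\op})\times(C\otimes_kB^{\op})\times(C\otimes_kC^{\op})$ and $M$ is a module over the factor $C\otimes_kB^{\op}$, we have $\pd_{R^e}M=\pd_{C\otimes B^{\op}}M<\infty$. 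Moreover $M\otimes_R M=0$: if $e_B,e_C\in R$ denote the central idempotents with $e_BR=B$, $e_CR=C$, then $M=e_CMe_B$, so $M\otimes_RM=e_CM e_B\otimes_R e_CMe_B=0$ because $e_Be_C=0$. Hence $M^{\otimes_Rj}=0$ for all $j\geq 2$ and a fortiori $\Tor_i^R(M,M^{\otimes_Rj})=0$ for all $i,j\geq 1$. All the hypotheses of Corollary~\ref{cor-trivial-extension} are satisfied, so
$$D_{sg}(A)\cong D_{sg}(R)\quad\text{and}\quad D_{def}(A)\cong D_{def}(R).$$
Finally, since $\mod R\cong \mod B\times\mod C$ with $\proj R$ and $\Gproj R$ decomposing compatibly, the natural functors induce equivalences $D_{sg}(R)\cong D_{sg}(B)\coprod D_{sg}(C)$ and $D_{def}(R)\cong D_{def}(B)\coprod D_{def}(C)$. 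Combining these gives the assertion.
\end{proof}
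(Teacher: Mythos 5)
Your proposal is correct and follows essentially the same route as the paper: identify $A\cong (B\times C)\ltimes M$, verify via the central idempotents that $\pd_{(B\times C)^e}M=\pd_{C\otimes B^{\op}}M<\infty$, $M\otimes_{B\times C}M=0$ and $\Tor_i^{B\times C}(M,M)=0$, apply Corollary~\ref{cor-trivial-extension}, and then split over the product $B\times C$. You actually spell out more detail than the paper (which dismisses these verifications as clear); the only tiny looseness is that your ``a fortiori'' for $j=1$ should be replaced by the same idempotent argument showing $\Tor_i^{B\times C}(M,M)=0$ for $i\geq 1$, which is immediate since Tor over a product decomposes componentwise.
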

\begin{proof}
It is clear that $A=
\left(\begin{array}{cc} B & 0 \\ M & C  \end{array}\right) \cong (B\times C)\ltimes M$, $M\mathop{\otimes}\limits_{B\times C}M=0$
and $\Tor _i^{B\times C}(M, M)=0$ for $i\geq 1$. Therefore,
the statement follows from Corollary~\ref{cor-trivial-extension}.
\end{proof}

Let $B$ and $C$ be two algebras, $M$ a $C$-$B$-bimodule,
$N$ a $B$-$C$-bimodule, $\phi :M\otimes _BN\rightarrow C$ a
$C$-$C$-bimodule homomorphism, and $\psi :N\otimes _CM\rightarrow B$ a
$B$-$B$-bimodule homomorphism. From \cite{Bass68,Mor58}, the Morita ring
is of the form $A_{(\phi, \psi)}=
\left(\begin{array}{cc} B & N \\ M & C  \end{array}\right) $, where the addition of elements is
componentwise and the multiplication is given by
$$
\left(\begin{array}{cc} b & n \\ m & c  \end{array}\right) \cdot \left(\begin{array}{cc} b' & n' \\ m' & c'  \end{array}\right)
=\left(\begin{array}{cc} bb'+\psi(n\otimes m') & bn'+nc' \\ mb'+cm' & \phi(m\otimes n')+cc'  \end{array}\right).$$

\begin{corollary}\label{cor-Morita-contex}
 Let $A_{(\phi, \psi)}=
\left(\begin{array}{cc} B & N \\ M & C  \end{array}\right) $
be a Morita ring.
Assuem that $\pd _{C\otimes B^{op}}M<\infty$, $\pd _{B\otimes C^{op}}N<\infty$,
$\Tor _i^B(M,N)=0=\Tor _i^C(N,M)$ for any $i>0$, and either $M\otimes _BN=0$ or $N\otimes _CM=0$.
Then
$$ D_{sg}(A_{(\phi, \psi)}) \cong D_{sg}(B)\coprod  D_{sg}(C).$$ Furthermore, if $\phi=0=\psi$ then
$$D_{def}(A_{(\phi, \psi)})\cong D_{def}(B)\coprod D_{def}(C).$$
\end{corollary}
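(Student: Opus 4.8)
The plan is to realize the Morita ring $A_{(\phi,\psi)}$ as a trivial extension of the triangular matrix algebra $\Lambda:=\left(\begin{array}{cc} B & 0 \\ M & C \end{array}\right)$ (or its transpose, depending on which of the two tensor products vanishes) and then apply Corollary~\ref{cor-trivial-extension} together with Corollary~\ref{cor-tri-matrix-alg}. Concretely, suppose $N\otimes_C M=0$; set $\Lambda=\left(\begin{array}{cc} B & 0 \\ M & C \end{array}\right)$ and let $L$ be the $\Lambda$-$\Lambda$-bimodule $\left(\begin{array}{cc} 0 & N \\ 0 & 0 \end{array}\right)$, with the bimodule structure inherited from the matrix multiplication in $A_{(\phi,\psi)}$. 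One checks that $L\otimes_\Lambda L$ is governed by $N\otimes_C M$ together with $N\otimes_B(\text{things involving }M)$, and the hypothesis $N\otimes_C M=0$ forces $L\otimes_\Lambda L=0$; hence $A_{(\phi,\psi)}\cong \Lambda\ltimes L$ as algebras (the remaining products $bn'+nc'$ and $\psi(n\otimes m')$ in the displayed multiplication table are exactly what one gets from the $\Lambda$-action on $L$ and are unaffected by $\phi,\psi$ since $\psi$ lands in $B$ via $N\otimes_C M=0$). The case $M\otimes_B N=0$ is symmetric, using the transpose triangular matrix algebra $\left(\begin{array}{cc} B & N \\ 0 & C \end{array}\right)$ and $L=\left(\begin{array}{cc} 0 & 0 \\ M & 0 \end{array}\right)$.

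Next I would verify the three hypotheses of Corollary~\ref{cor-trivial-extension} for the pair $\Lambda\subseteq \Lambda\ltimes L$. First, $L^{\otimes_\Lambda 2}=0$ by the computation above, so $p=2$ works. Second, $\Tor_i^\Lambda(L,L)=0$ for $i\geq 1$ is again forced, since $L^{\otimes_\Lambda 2}=0$ already shows the degree-zero Tor vanishes, and the higher Tor groups are computed from a projective resolution of $L$ over $\Lambda^e$, which after tensoring telescopes into $\Tor$ groups of $N$ over $C$ against $M$ — these vanish by the hypothesis $\Tor_i^C(N,M)=0$ (and symmetrically $\Tor_i^B(M,N)=0$ in the transposed case). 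Third, I must show $\pd_{\Lambda^e}L<\infty$. Here I would use the standard description of projective $\Lambda$-$\Lambda$-bimodules and the fact that, as a $C$-$B$-bimodule (the only "corner" where $L$ is supported), $N$ has finite projective dimension over $B\otimes C^{op}$ by hypothesis; combined with $\pd_{C\otimes B^{op}}M<\infty$ and $\pd_{B^e}B=\pd_{C^e}C=0$, a filtration/change-of-rings argument over $\Lambda^e=(\Lambda\otimes\Lambda^{op})$ gives $\pd_{\Lambda^e}L<\infty$. Granting these, Corollary~\ref{cor-trivial-extension} yields $D_{sg}(A_{(\phi,\psi)})\cong D_{sg}(\Lambda)$ and, since $\Lambda\subseteq A_{(\phi,\psi)}$ is split (the trivial-extension inclusion always splits), also $D_{def}(A_{(\phi,\psi)})\cong D_{def}(\Lambda)$. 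Finally, Corollary~\ref{cor-tri-matrix-alg} applied to $\Lambda$ (whose hypothesis $\pd_{C\otimes B^{op}}M<\infty$ is exactly one of our assumptions) gives $D_{sg}(\Lambda)\cong D_{sg}(B)\amalg D_{sg}(C)$ and $D_{def}(\Lambda)\cong D_{def}(B)\amalg D_{def}(C)$; composing the two equivalences proves the claim. The additional hypothesis $\phi=0=\psi$ in the Gorenstein-defect statement is needed precisely so that the algebra isomorphism $A_{(\phi,\psi)}\cong\Lambda\ltimes L$ respects the split inclusion of $B\times C$ (when $\phi,\psi$ are non-zero the identification with a trivial extension of $\Lambda$ still works for $D_{sg}$, but the composite extension $B\times C\subseteq A_{(\phi,\psi)}$ need no longer be split).

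The main obstacle I anticipate is the bookkeeping in the first paragraph: checking carefully that $A_{(\phi,\psi)}$ really is \emph{isomorphic as an algebra} to $\Lambda\ltimes L$ when $N\otimes_C M=0$, i.e.\ that the map $\left(\begin{smallmatrix} b & n \\ m & c\end{smallmatrix}\right)\mapsto\bigl(\left(\begin{smallmatrix} b & 0 \\ m & c\end{smallmatrix}\right),\left(\begin{smallmatrix} 0 & n \\ 0 & 0\end{smallmatrix}\right)\bigr)$ is multiplicative — the subtlety is that the $\psi$-term $\psi(n\otimes m')$ in the top-left corner of the product must vanish, which is exactly where $N\otimes_C M=0$ enters, and that the $\phi$-term $\phi(m\otimes n')$ in the bottom-right corner is absorbed correctly; one has to be sure no term is being silently dropped. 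A secondary technical point is the finiteness of $\pd_{\Lambda^e}L$, for which the cleanest route is probably to exhibit an explicit bounded $\Lambda^e$-projective resolution of $L$ built from a bounded $B\otimes C^{op}$-projective resolution of $N$ (tensored up appropriately), rather than invoking a general change-of-rings spectral sequence.
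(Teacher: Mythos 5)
Your reduction breaks down at its very first step, and in exactly the place you flagged as the ``main obstacle.'' Suppose $N\otimes_CM=0$; this forces $\psi=0$ but says nothing about $\phi$. In $A_{(\phi,\psi)}$ the product of an element of your subalgebra $\Lambda=\left(\begin{smallmatrix} B & 0 \\ M & C\end{smallmatrix}\right)$ with an element of $L=\left(\begin{smallmatrix} 0 & N \\ 0 & 0\end{smallmatrix}\right)$ is
$\left(\begin{smallmatrix} b & 0 \\ m & c\end{smallmatrix}\right)\left(\begin{smallmatrix} 0 & n' \\ 0 & 0\end{smallmatrix}\right)=\left(\begin{smallmatrix} 0 & bn' \\ 0 & \phi(m\otimes n')\end{smallmatrix}\right)$,
so when $\phi\neq 0$ the subspace $L$ is not even a left $\Lambda$-submodule of $A_{(\phi,\psi)}$, the proposed map $\left(\begin{smallmatrix} b & n \\ m & c\end{smallmatrix}\right)\mapsto\bigl(\left(\begin{smallmatrix} b & 0 \\ m & c\end{smallmatrix}\right),\left(\begin{smallmatrix} 0 & n \\ 0 & 0\end{smallmatrix}\right)\bigr)$ is not multiplicative, and $A_{(\phi,\psi)}\not\cong\Lambda\ltimes L$ via this identification; the ``absorbed correctly'' claim and the closing remark that ``the identification still works for $D_{sg}$ when $\phi,\psi$ are non-zero'' are precisely the false steps. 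The symmetric choice (upper-triangular $\Lambda$ with $L$ the $M$-corner) fails the same way when $\psi\neq 0$. Since the corollary's singular-equivalence statement is asserted for arbitrary $\phi,\psi$ (only the defect statement assumes $\phi=0=\psi$), your argument proves it only in the special case where both structure maps vanish.

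The paper avoids this entirely: it takes the diagonal embedding $B\times C\hookrightarrow A_{(\phi,\psi)}$, $(b,c)\mapsto\left(\begin{smallmatrix} b & 0 \\ 0 & c\end{smallmatrix}\right)$, which is a subalgebra for every $\phi,\psi$, with quotient bimodule $A_{(\phi,\psi)}/(B\times C)=M\oplus N$; the hypotheses translate directly into the conditions of Theorem~\ref{theorem-sing-equiv} (finite $\pd$ over $(B\times C)^e$, nilpotency of $(M\oplus N)^{\otimes_{B\times C}\bullet}$ from the vanishing of one of $M\otimes_BN$, $N\otimes_CM$, and the Tor conditions from $\Tor_i^B(M,N)=0=\Tor_i^C(N,M)$), while $\phi=0=\psi$ is used only to make this extension split so that Theorem~\ref{theorem-def-equiv} applies. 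Your subsidiary verifications in the $\phi=\psi=0$ case (the computation $L^{\otimes_\Lambda 2}=0$, the vanishing of $\Tor_i^\Lambda(L,L)$ via a right $C$-projective resolution of $N$, and $\pd_{\Lambda^e}L<\infty$ from $\pd_{B\otimes C^{op}}N<\infty$ together with $\pd_{C\otimes B^{op}}M<\infty$) are reasonable, so the two-step route through Corollaries~\ref{cor-trivial-extension} and~\ref{cor-tri-matrix-alg} does give the defect statement; but for the general $D_{sg}$ statement you need the paper's one-step argument (or some genuinely new idea to handle nonzero $\phi$ or $\psi$).
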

\begin{proof}
Consider the algebra inclusion $B\times C\hookrightarrow A_{(\phi, \psi)}$ defined by $(b,c)\mapsto
\left(\begin{array}{cc} b & 0 \\ 0 & c  \end{array}\right)$ which is split if $\phi=0=\psi$. Since
$A_{(\phi, \psi)}/(B\times C)=M\oplus N$, it is easy to see that
the statement follows from Theorem~\ref{theorem-sing-equiv}
and Theorem~\ref{theorem-def-equiv}.
\end{proof}

Now we will illustrate our results by an example.
\begin{example}\label{exam-1}
{\rm Let $\Lambda = kQ/I$ be the algebra where $Q$ is the quiver
$$\xymatrix{1 \ar@<+1ex>[r]^\beta  \ar@(ul,dl)_\gamma & 2 \ar@<+1ex>[l]^\alpha }$$
and $I=\langle \gamma ^2, \alpha \beta\rangle$. We write the concatenation of paths from the right to the left.
Let $\Gamma=\Lambda/\Lambda \alpha \Lambda$. Then $\Gamma$ is the quiver
$$\xymatrix{1 \ar[r]^\beta  \ar@(ul,dl)_\gamma & 2  }$$
with relation $\{ \gamma ^2\}$, and $\Gamma$ is a subalgebra of
$\Lambda$. Therefore, $\Lambda \alpha \Lambda$ can be viewed as a $\Gamma$-$\Gamma$
bimodule,
and there is an algebra isomorphism $\Lambda\cong\Gamma \ltimes \Lambda \alpha \Lambda$
mapping $\overline{\varepsilon}_i$ to $(\overline{\varepsilon}_i,0)$, $\overline{\gamma}$ to $(\overline{\gamma},0)$,
$\overline{\beta}$ to $(\overline{\beta},0)$ and $\overline{\alpha}$ to $(0,\overline{\alpha})$,
where $\varepsilon _i$ is the trivial path at $i$. It is easy to check that $\Lambda \alpha \Lambda _\Gamma
\cong (e_2\Gamma/\rad(e_2\Gamma))^4$ and $_\Gamma \Lambda \alpha \Lambda
\cong \Gamma e_1$. Therefore, we get $\Lambda \alpha \Lambda \otimes _\Gamma \Lambda \alpha \Lambda =0$
and $\Tor _i^\Gamma(\Lambda \alpha \Lambda, \Lambda \alpha \Lambda)=0$ for each $i\geq 1$.
Now we claim that $\pd _{\Gamma^e}\Lambda \alpha \Lambda<\infty$, and then
we get triangle equivalences
$ D_{sg}(\Lambda) \cong D_{sg}(\Gamma)$ and $D_{def}(\Lambda)\cong D_{def}(\Gamma)$ by Corollary~\ref{cor-trivial-extension}.
Indeed, the enveloping algebra $\Gamma^e$ has the following quiver
\[\scriptsize \xymatrix@C=1.5cm@R=1.5cm{
  1\times 1^{\mathrm{op}}
  \ar@(ul,dl)_{\gamma\otimes 1^{\mathrm{op}}}
  \ar@(ul,ur)^{1\otimes \gamma^{\mathrm{op}}}
  \ar[r]^{\beta\otimes 1^{\mathrm{op}}}
  \ar@{<-}[d]_{1\otimes\beta^{\mathrm{op}}}
& 2\times 1^{\mathrm{op}}
  \ar@(ul,ur)^{2\otimes\gamma^{\mathrm{op}}}
  \ar@{<-}[d]_{2\otimes\beta^{\mathrm{op}}}
\\1\times 2^{\mathrm{op}}
  \ar[r]_{\beta\otimes 2^{\mathrm{op}}}
  \ar@(ul,dl)_{\gamma\otimes 2^{\mathrm{op}}}
& 2\times 2^{\mathrm{op}} }\]
with relations $\{ (\gamma\otimes 1^{\mathrm{op}})^2, (1\otimes \gamma^{\mathrm{op}})^2,
(2\otimes\gamma^{\mathrm{op}})^2, (\gamma\otimes 2^{\mathrm{op}})^2,
(2\otimes\beta^{\mathrm{op}})(\beta\otimes 2^{\mathrm{op}})-(\beta\otimes 1^{\mathrm{op}})(1\otimes\beta^{\mathrm{op}}),
(\gamma\otimes 1^{\mathrm{op}})(1\otimes\beta^{\mathrm{op}})-(1\otimes\beta^{\mathrm{op}})(\gamma\otimes 2^{\mathrm{op}}),
(2\otimes\gamma^{\mathrm{op}})(\beta\otimes 1^{\mathrm{op}})-(\beta\otimes 1^{\mathrm{op}})(1\otimes \gamma^{\mathrm{op}}),
(\gamma\otimes 1^{\mathrm{op}})(1\otimes \gamma^{\mathrm{op}})-(1\otimes \gamma^{\mathrm{op}})(\gamma\otimes 1^{\mathrm{op}})\}$.
The $\Gamma^e$-module $\Lambda \alpha \Lambda$ is given by the following representation:

\medskip

\[\scriptsize \xymatrix@C=1.5cm@R=1.5cm{
  0
  \ar@(ul,dl)
  \ar@(ul,ur)
  \ar[r]
  \ar@{<-}[d]
& 0
  \ar@(ul,ur)
  \ar@{<-}[d]
\\k^2
  \ar[r]_{\tiny {\left(\begin{array}{cc} 1 & 0 \\ 0 & 1  \end{array}\right)}}
  \ar@(ul,dl)_{\tiny { \left(\begin{array}{cc} 0 & 0 \\ 1 & 0  \end{array}\right)}}
& k^2 \ .}\]
There is an exact sequence
$$0\rightarrow \Gamma^ee_{1\times 1^{\mathrm{op}}}\rightarrow \Gamma^ee_{1\times 2^{\mathrm{op}}}
\rightarrow \Lambda \alpha \Lambda \rightarrow 0$$
of $\Gamma^e$-modules, and thus $\pd _{\Gamma^e}\Lambda \alpha \Lambda=1$.

}
\end{example}

\bigskip

\noindent {\footnotesize {\bf Funding} This work is supported by
the National Natural Science Foundation of China (12061060, 11961007),
the project of Young and Middle-aged Academic and Technological leader of Yunnan
(Grant No. 202305AC160005) and
the Scientific and Technological Innovation Team of Yunnan (Grant No.
2020CXTD25).
}

\end{document}